\newtheorem{thm}{Theorem}[section]
\newtheorem{lem}[thm]{Lemma}
\newtheorem{cor}[thm]{Corollary}
\newtheorem{fact}[thm]{Fact}
\theoremstyle{definition}
\theoremstyle{remark}
\newtheorem{question}[thm]{Question}
\newcommand{\comment}[1]{}
\def\ignore#1{{ }}
\newcommand{\T}{\mathcal{T}}
\newcommand{\const}{\alpha}
\begin{document}

% \linenumbers

% first author 
\author{H.A. Kierstead}
\address{Department of Mathematics and Statistics, Arizona State University,
Tempe, AZ 85287, USA. }
\email{kierstead@asu.edu}

% second author
\author{A.V. Kostochka}
\address{Department of Mathematics \\ University of Illinois \\ Urbana, IL 61801, USA\\ and Sobolev Institute of Mathematics\\ Novosibirsk, Russia}
\thanks{Research of this author is supported in part by NSF grant  DMS-1600592 and  by grants 15-01-05867  and 16-01-00499
 of the Russian Foundation for Basic Research. }
\email{kostochk@math.uiuc.edu}

% third author
\author{A. M\lowercase{c}Convey}
\address{Department of Mathematics \\ University of Illinois \\ Urbana, IL 61801\\ USA}
\thanks{This author gratefully acknowledges support from the Campus Research Board, University of Illinois.}
\email[Corresponding author]{mcconve2@illinois.edu}

% title
\title{A sharp Dirac-Erd\H{o}s type bound for large graphs}

\date{\today}

\begin{abstract} 
Let $k \geq 3$ be an integer, $h_{k}(G)$ be the number of vertices of degree at least $2k$ in a graph $G$, and $\ell_{k}(G)$ be the number of vertices of degree at most $2k-2$ in $G$.
 Dirac and Erd\H{o}s proved in 1963  that if $h_{k}(G) - \ell_{k}(G) \geq k^{2} + 2k - 4$, then  $G$ contains 
  $k$ vertex-disjoint cycles. For each $k\geq 2$, they also showed an infinite sequence of graphs $G_k(n)$ with $h_{k}(G_k(n)) - \ell_{k}(G_k(n)) = 2k-1$
such that $G_k(n)$ does not have $k$ disjoint cycles.
Recently, the authors proved that, for $k \geq 2$, a bound of $3k$ is sufficient to guarantee the existence of $k$ disjoint cycles and presented for every $k$ a graph
$G_0(k)$ with  $h_{k}(G_0(k)) - \ell_{k}(G_0(k))=3k-1$ and no $k$ disjoint cycles.
 The goal of this paper is to refine and sharpen this result: We show that the Dirac--Erd\H{o}s construction is optimal in the 
sense that for every $k \geq 2$, 
there are only finitely many graphs $G$ with $h_{k}(G) - \ell_{k}(G) \geq 2k$ but no $k$ disjoint cycles. In particular,
every graph $G$ with $|V(G)| \geq 19k$ and $h_{k}(G) - \ell_{k}(G) \geq 2k$ contains $k$ disjoint cycles.
\end{abstract}

\maketitle

{\small{Mathematics Subject Classification: 05C35, 05C70, 05C10.}}{\small \par}

{\small{Keywords: Disjoint Cycles, Minimum Degree, Disjoint Triangles.\\}}{\small \par}

%%======================================================================
%%Section:  Introduction
%%======================================================================
%% 
%%
 \section{Introduction}\label{sec:intro}
%%
%%
%%======================================================================
%%======================================================================
For a graph $G$, let $|G| = |V(G)|$, $\| G \| = |E(G)|$, and $\delta(G)$ be the minimum degree of a vertex in $G$. 
For a positive integer $k$, define $H_{k}(G)$ to be the subset of vertices with degree at least $2k$ and $L_{k}(G)$ to be the subset of vertices of degree at most $2k - 2$.
Two graphs are \emph{disjoint} if they have no common vertices.

Every graph with minimum degree at least $2$ contains a cycle.  The following seminal result of Corr\'{a}di and Hajnal \cite{C-H} generalizes this fact.

\begin{thm}\cite{C-H}\label{thm:C-H}
Let $G$ be a graph and $k$ a positive integer.  If $|G| \geq 3k$ and $\delta(G) \geq 2k$, then $G$ contains $k$ disjoint cycles.
\end{thm}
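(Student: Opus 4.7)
My plan is to argue by contradiction via an extremal configuration, the standard approach for results of this type. Suppose $G$ satisfies $|G| \geq 3k$ and $\delta(G) \geq 2k$ but contains no $k$ pairwise disjoint cycles. Choose a family $\mathcal{C} = \{C_1, \dots, C_m\}$ of pairwise disjoint cycles in $G$ that first maximizes $m$ and, subject to that, minimizes $\sum_{i=1}^m |C_i|$. By hypothesis $m \leq k-1$. Set $H = G - \bigcup_i V(C_i)$, so $|V(H)| \geq 3k - \sum_i |C_i| \geq 3$.

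First I would collect the structural consequences of the extremality of $\mathcal{C}$. Because $m$ is maximal, $H$ contains no cycle and is therefore a forest. Because $\sum_i |C_i|$ is minimum, each $C_i$ is induced in $G$ (a chord would yield a strictly shorter replacement). The central structural input is the following \emph{rerouting lemma}: if $v \in V(H)$ has three neighbors $x,y,z$ on a cycle $C_i$ (in cyclic order), then $|C_i| = 3$. Indeed, the three cycles obtained by routing through $v$ together with one of the three arcs of $C_i$ between consecutive neighbors have total length $|C_i| + 6$, so some alternate has length at most $\lfloor(|C_i|+6)/3\rfloor$, which is strictly less than $|C_i|$ whenever $|C_i| \geq 4$, contradicting minimality.

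Next comes the counting step. Since $H$ is a forest, some leaf $v \in V(H)$ satisfies $d_H(v) \leq 1$, so $v$ has at least $2k - 1$ neighbors in $\bigcup_i V(C_i)$. With $m \leq k-1$, the pigeonhole principle produces a cycle $C_i$ with $|N(v) \cap V(C_i)| \geq \lceil (2k-1)/(k-1) \rceil \geq 3$, and the rerouting lemma forces this $C_i$ to be a triangle fully dominated by $v$. Refining the same count, $v$ must be fully adjacent to at least one triangle of $\mathcal{C}$ and have at least two neighbors on most of the other members of $\mathcal{C}$.

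The main obstacle is the case where shortening cannot yield a contradiction, notably when $\mathcal{C}$ consists entirely of triangles. Here the contradiction must instead come from producing an $(m+1)$st disjoint cycle. For this I would locate a second leaf $u \in V(H)$ with analogously dense attachments, choose a pair of triangles in $\mathcal{C}$ to which $\{u, v\}$ is jointly heavily attached, and combine the eight vertices available (two from $H$ and six from the pair of triangles) to exhibit three pairwise disjoint triangles in their place, thereby replacing two cycles of $\mathcal{C}$ by three and contradicting maximality of $m$. Forcing this local reconfiguration to go through in the tight regime $|G|=3k$, $m=k-1$, where $|V(H)| = 3$ leaves no slack, is the delicate point; it is exactly what the $3k$ hypothesis is calibrated for, and sharpness is witnessed by $G = K_{2k+1}$.
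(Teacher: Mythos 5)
This statement is the Corr\'adi--Hajnal theorem, which the paper cites from \cite{C-H} and does not prove, so there is no in-paper argument to compare against; I can only assess your sketch on its own terms. Your setup is the standard and correct one: the doubly extremal family $\mathcal{C}$, the observation that $H$ is a forest, the rerouting lemma showing that a vertex of $H$ with three neighbours on some $C_i$ forces $|C_i|=3$, and the pigeonhole count giving a leaf of $H$ fully joined to a triangle of $\mathcal{C}$. All of that matches the classical argument.

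The genuine gap is in your endgame, and it is not merely an omitted detail. You propose to take two triangles of $\mathcal{C}$ together with two vertices of $H$ and ``exhibit three pairwise disjoint triangles in their place,'' but three pairwise disjoint triangles require nine vertices and you only have eight, so this reconfiguration is impossible as stated. The actual mechanism in Corr\'adi--Hajnal's proof is different: one augments a \emph{single} cycle of $\mathcal{C}$ into \emph{two} disjoint cycles using the two endpoints of a longest path $P$ in $H$ (both of which are leaves of the forest and hence send at least $2k-1$ edges to $\bigcup\mathcal{C}$, so that some $C_i$ receives at least five edges from the pair), together with the interior of $P$ or a third component of $H$; making this work requires an additional extremal criterion (typically maximizing the length of a longest path in $H$ after minimizing $\sum_i|C_i|$) and a nontrivial case analysis that your sketch does not supply. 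Until that step is rebuilt on a mechanism that is at least arithmetically feasible, the proof does not close. (A small side remark: $K_{2k+1}$ witnesses sharpness of the hypothesis $|G|\geq 3k$, not of the tight regime $|G|=3k$; the relevant extremal example there is $K_{3k-1}$, or for the degree condition the graphs $K_n-E(K_{n-2k+1})$ mentioned in the introduction.)
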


Both conditions in Theorem~\ref{thm:C-H} are sharp.  The condition $|G| \geq 3k$ is necessary as every cycle contains at least $3$ vertices.  
Further, there are infinitely many graphs that satisfy $|G| \geq 3k$ and $\delta(G) = 2k - 1$, but contain at most $k-1$ disjoint cycles. 
For example, for any $n\ge 3k$, let   $G_n=K_{n}-E(K_{n-2k+1})$ where $K_{n-2k+1}\subseteq K_n$.
 
 The Corr\'{a}di-Hajnal Theorem inspired several results related to the existence of disjoint cycles in a graph
 (e.g.~\cite{D-E, Di, HSz, Enomoto, Wang, KK-Ore, CFKS,  KK-refCH, K-K-Y, KKMY}).  
This paper focuses on the following theorem of Dirac and Erd\H{o}s~\cite{D-E}, one of the first attempts to generalize Theorem~\ref{thm:C-H}.

\begin{thm}\cite{D-E}\label{thm:D-E}
Let $k \geq 3$  be an integer and $G$ be a graph  with $|H_{k}(G)|  - |L_{k}(G)|  \geq k^{2} + 2k - 4$.  Then $G$ contains $k$ disjoint cycles.
\end{thm}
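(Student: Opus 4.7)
My plan is to prove Theorem~\ref{thm:D-E} by induction on $k$, with base case $k=3$ handled directly, and an inductive step that removes a cleverly chosen short cycle from $G$ to reduce to the $k-1$ case. The base case, showing that $|H_3(G)| - |L_3(G)| \geq 11$ guarantees three disjoint cycles, I would approach by combining Theorem~\ref{thm:C-H} with an auxiliary induction on $|V(G)|$: either a short cycle exists whose removal preserves enough of the $H_3$-vs-$L_3$ surplus, or the graph is already dense enough for Theorem~\ref{thm:C-H} to apply after restricting to a suitable subgraph (for instance, after iteratively deleting low-degree vertices).

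For the inductive step, let $G$ satisfy $|H_k(G)| - |L_k(G)| \geq k^2 + 2k - 4$. The goal is to exhibit a cycle $C$ in $G$ for which $G' := G - V(C)$ satisfies
\[
|H_{k-1}(G')| - |L_{k-1}(G')| \;\geq\; (k-1)^2 + 2(k-1) - 4 \;=\; k^2 - 5;
\]
by induction, $G'$ then contains $k-1$ disjoint cycles, which together with $C$ give $k$ disjoint cycles in $G$. The key quantitative observation is the following: for $v \in V(G) \setminus V(C)$, if $v \in H_k(G)$ and $|N(v) \cap V(C)| \leq 2$, then $v \in H_{k-1}(G')$; and if $v \notin L_k(G)$ but $v \in L_{k-1}(G')$, then $|N(v) \cap V(C)| \geq 3$. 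Writing $B(C) := \{v \in V(G) \setminus V(C) : |N(v) \cap V(C)| \geq 3\}$, one obtains
\[
|H_{k-1}(G')| - |L_{k-1}(G')| \;\geq\; |H_k(G)| - |L_k(G)| - |V(C)| - 2|B(C)|.
\]
So it suffices to find a cycle $C$ with $|V(C)| + 2|B(C)| \leq 2k+1$.

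The main obstacle is producing such a short, uncrowded cycle. My first attempt would be to find a triangle $T$ inside the induced subgraph $G[H_k(G)]$: then $|V(T)|=3$ and the requirement becomes $|B(T)| \leq k-1$, namely at most $k-1$ external vertices are adjacent to all three vertices of $T$. Because $|H_k(G)| \geq k^2 + 2k - 4$ is large while each such vertex has $\geq 2k$ neighbors, Tur\'an-type edge counting in $G[H_k]$ produces many triangles, so one should satisfy $|B(T)| \leq k-1$ unless the graph has very special structure. If every triangle in $G[H_k]$ fails this test, then many triples in $H_k$ possess $\geq k$ common external neighbors, and this structural richness should force a dense subgraph to which Theorem~\ref{thm:C-H} applies, yielding $k$ disjoint cycles directly. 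The dichotomy between \emph{some short uncrowded cycle exists} and \emph{the graph is dense enough to invoke Theorem~\ref{thm:C-H}} is the technical heart of the argument; making this dichotomy rigorous, and handling the boundary cases where $G[H_k]$ is triangle-free (where one must resort to $4$-cycles with $|V(C)|+2|B(C)| \leq 2k+1$), is where I expect the most delicate work.
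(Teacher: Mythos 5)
This statement is quoted from Dirac and Erd\H{o}s \cite{D-E}; the paper you were given does not prove it, so your proposal can only be measured against the original argument and on its own terms. Your skeleton is the right one, and in fact it is essentially the Dirac--Erd\H{o}s strategy: induct on $k$, delete a short cycle $C$, and control the damage to the surplus $|H_k|-|L_k|$. Your accounting is correct --- a high vertex survives into $H_{k-1}(G-V(C))$ unless it has at least one neighbour on $C$ lost twice over, a non-low vertex can only become low for $k-1$ if it has at least $3$ neighbours on $C$, and the resulting requirement $|V(C)|+2|B(C)|\le 2k+1$ telescopes exactly to explain the quadratic constant $k^2+2k-4$. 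This is a genuine insight and shows you have reverse-engineered the shape of the bound correctly.

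However, as written this is a plan rather than a proof, and the gap sits precisely at the step you yourself flag as delicate: you never establish that a cycle $C$ with $|V(C)|+2|B(C)|\le 2k+1$ exists, nor do you make the ``dense fallback'' precise. Both halves of the dichotomy genuinely need work. For $K_n$ with $n$ large, \emph{every} triangle $T$ has $B(T)=V\smallsetminus V(T)$, so no short cycle is uncrowded and the entire burden falls on the Corr\'adi--Hajnal side; conversely, $G[H_k(G)]$ may be triangle-free or even edgeless (the high vertices can have all their neighbours outside $H_k$), so Tur\'an-type counting inside $G[H_k]$ does not get off the ground, and one must look for short cycles that use vertices of intermediate degree, where the bookkeeping for $B(C)$ is less favourable. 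The base case $k=3$ is likewise only a sketch (``I would approach by\dots''). Since the existence of the uncrowded cycle and the rigorous treatment of the dense case constitute essentially the whole content of the Dirac--Erd\H{o}s proof, the proposal as it stands has a genuine, load-bearing gap; the reduction framework is correct, but the theorem is not yet proved.
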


Dirac and Erd\H{o}s suggested that the bound $k^2+2k-4$ is not best possible and also 
constructed  an infinite sequence of graphs $G_k(n)$ with $h_{k}(G_k(n)) - \ell_{k}(G_k(n)) = 2k-1$
such that $G_k(n)$ does not have $k$ disjoint cycles.
They did not explicitly pose problems, and
it seems that Erd\H{o}s regretted not doing so, as later in \cite{Erdos} he remarked (about \cite{D-E}): 
``This paper was perhaps undeservedly neglected; one reason was that we have few easily quotable theorems there, and do not state any unsolved problems.'' 
Here we consider questions that are implicit in \cite{D-E}. 

For small graphs, the bound of $|H_k(G)|-|L_k(G)|\ge2k$ 
 is not sufficient to guarantee the existence of $k$ disjoint cycles.  Indeed, $K_{3k-1}$ contains at most $k - 1$ disjoint cycles, 
so  for small graphs, a bound of at least $3k$ is necessary.  The authors~\cite{KKM} recently proved that $3k$ is also sufficient.
 
\begin{thm}\cite{KKM}\label{thm:3k}
Let $k \geq 2$ be an integer and $G$ be a graph with $|H_k (G)| - |L_{k}(G)| \geq 3k$.  Then $G$ contains $k$ disjoint cycles.
\end{thm}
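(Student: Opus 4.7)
The plan is to proceed by induction on $k$, with the base case $k = 2$ handled separately by directly building two disjoint cycles from the six ``surplus'' high-degree vertices guaranteed by $|H_2(G)| - |L_2(G)| \ge 6$. For the inductive step, assume the theorem for $k - 1$, and let $G$ be a smallest counterexample with $|H_k(G)| - |L_k(G)| \ge 3k$ but no $k$ disjoint cycles. A first reduction comes from Corr\'adi--Hajnal itself: if any subgraph of $G$ has at least $3k$ vertices and minimum degree at least $2k$, then Theorem~\ref{thm:C-H} produces $k$ disjoint cycles, so I may assume no such subgraph exists. In particular, the induced subgraph on $H_k(G)$ cannot simultaneously have $\ge 3k$ vertices and minimum degree $\ge 2k$, which forces vertices of $H_k(G)$ to have many neighbors outside any prospective dense core.

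\textbf{Key step.} The heart of the argument is to locate a cycle $C$ whose deletion yields $G' = G - V(C)$ with $|H_{k-1}(G')| - |L_{k-1}(G')| \ge 3(k-1)$; then the inductive hypothesis produces $k - 1$ disjoint cycles in $G'$, which together with $C$ give the required $k$ disjoint cycles in $G$. A vertex $v \in H_k(G) \setminus V(C)$ lies in $H_{k-1}(G')$ provided $|N_G(v) \cap V(C)| \le \deg_G(v) - (2k - 2)$, which in the worst case (when $\deg_G(v) = 2k$) forces $v$ to have at most two neighbors on $C$. Dually, $v \in L_k(G)$ descends to $L_{k-1}(G')$ only if its degree drops by at least two, so $V(C)$ must ``hit'' many $L_k$-neighborhoods. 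I would therefore aim for a short cycle $C$ (length $3$, $4$, or $5$) on $H_k$ vertices, chosen by an extremal argument so that (i) few $H_k$ vertices outside $C$ have three or more neighbors on $C$ and (ii) most $L_k$ vertices outside $C$ have at least two neighbors on $C$. An alternative, classical line is to take a maximum disjoint-cycle packing $\mathcal{C} = \{C_1, \ldots, C_t\}$ with $t \le k - 1$ minimizing $\sum |V(C_i)|$, observe that $R = G - \bigcup V(C_i)$ is a forest, and use the high-degree vertices of $H_k(G) \cap R$ to execute a rotation/swap onto some short $C_i$, contradicting the maximality of $t$.

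\textbf{Main obstacle.} The step I expect to be hardest is the $L_k$ bookkeeping. A vertex $v \in L_k(G)$ that loses no neighbor to $V(C)$ retains its degree and fails to enter $L_{k-1}(G')$, eroding the surplus $|H_k| - |L_k|$ in the wrong direction. In bad configurations, the $H_k$ vertices cluster in a small dense set whose vertices share a tiny common $L_k$-neighborhood, so any short cycle $C$ either loses too many $H_k$ vertices (by being long) or keeps too many $L_k$ vertices out of $L_{k-1}(G')$. I would address this via a case split on the structure of the induced subgraph on $H_k(G)$: when it is dense enough, Corr\'adi--Hajnal applied to the cluster wins directly; when it is sparse, the degree condition forces rich bipartite structure between $H_k(G)$ and $L_k(G)$, from which a good $C$ can be extracted. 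Finally, the extremal example $K_{3k-1}$, satisfying $|H_k| - |L_k| = 3k - 1$ and admitting only $k - 1$ disjoint cycles, shows that $3k$ is tight and indicates that the argument must separately treat graphs with $|V(G)|$ close to $3k$, lest a near-copy of $K_{3k-1}$ appear as the residue after a reduction step and derail the induction.
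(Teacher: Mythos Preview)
This theorem is not proved in the present paper. It is quoted from~\cite{KKM} (the authors' earlier paper) and used here only as a black box: it serves as the base case $i \le 0$ of the induction proving Theorem~\ref{thm:2k induct}, and it is invoked again at the very end of Case~2. So there is no ``paper's own proof'' to compare your proposal against.

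As for your sketch on its own terms: the overall shape---find a short cycle $C$ so that $G' = G - V(C)$ satisfies the hypothesis for $k-1$, then induct---is exactly the Dirac--Erd\H{o}s template, and the packing-plus-forest alternative you mention is also standard. But your bookkeeping has the sign backwards in one place: a vertex $v \in L_k(G)$ automatically satisfies $d_{G'}(v) \le d_G(v) \le 2k-2 = 2(k-1)$, so \emph{every} low vertex outside $C$ stays low for $k-1$ with no condition needed. The real danger is the reverse: new low vertices can be created from $V^{2k-1}(G)$ or $V^{2k}(G)$ when they lose two or three neighbors to $C$, and these must be charged against high vertices on $C$ or against the slack in the $3k$ bound. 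Your discussion of the ``main obstacle'' therefore misidentifies where the loss occurs. The actual proof in~\cite{KKM} controls exactly this creation of new low vertices, and the analogous step in the present paper (Lemma~\ref{lem:minimal}(\ref{minimal 5}) and the good-triangle machinery of Section~\ref{sec:proof}) shows the flavor of what is needed.
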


There exist graphs $G$ with at least $3k$ vertices and $|H_{k}(G)| - |L_{k}(G)| \geq 2k$ that do not contain $k$ disjoint cycles.  
For example, consider the graph $G_0(k)$ obtained from $K_{3k-1}$ by selecting a subset $S \subseteq V(K_{3k-1})$ with $|S| = k$, removing all edges in $G[ S ]$, 
adding an extra vertex $x$ and the edges from $x$ to each vertex in $S$.  Then $|H_{k}(G_0(k))| - |L_{k}(G_0(k))| = 3k - 2$ and $|G_0(k)| = 3k$,
 but $x$ is not in a triangle, so $G_0(k)$ contains at most $k-1$ disjoint cycles.

In~\cite{KKM}, the authors describe another graph $G_1(k)$, obtained from $G_0(k)$ by  adding $k$ vertices of degree $1$, each adjacent to $x$. 
 The graph $G_1(k)$ still contains only $k-1$ disjoint cycles,  but has $4k$ vertices and $|H_{k}(G_1(k))| - |L_{k}(G_1(k))| = 2k$.  
However, in the special case that $G$ is planar, it is shown in~\cite{KKM} that the bound of $2k$ is sufficient.

\begin{thm}\cite{KKM}\label{thm:planar}
Let $k \geq 2$ be an integer and $G$ be a planar graph.  If
\[ |H_{k}(G)| - |L_{k}(G)| \geq  2k,\]
then $G$ contains $k$ disjoint cycles.
\end{thm}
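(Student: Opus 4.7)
The plan is to combine the $3k$-threshold result (Theorem~\ref{thm:3k}) with an Euler-formula counting argument to reduce to just a handful of small cases $k\le 5$, and then induct on $k$. We may assume throughout that $G$ has no isolated vertices, since removing them only increases $|H_k(G)|-|L_k(G)|$ and preserves the conclusion. By Theorem~\ref{thm:3k} we may also assume $|H_k(G)|-|L_k(G)|\le 3k-1$, so the excess lies in the narrow window $[2k,3k-1]$.

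Write $h=|H_k(G)|$, $\ell=|L_k(G)|$, and let $m$ be the number of vertices of degree exactly $2k-1$, so $h+m+\ell=|G|$ and $\sum_v d(v)\ge 2kh+(2k-1)m+\ell$. Euler's formula gives $\sum_v d(v)\le 6|G|-12$. Combining these with $\ell\le h-2k$ and simplifying yields $(2k-11)h+(2k-7)m\le -10k-12$. For $k\ge 6$ the left side is nonnegative while the right side is negative, so the hypothesis is infeasible and the theorem holds vacuously. Hence it suffices to treat $k\in\{2,3,4,5\}$.

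For the inductive step ($k\in\{3,4,5\}$), the goal is to find a short cycle $C$ in $G$ (ideally a triangle; planar graphs always contain a vertex of degree at most $5$, which gives access to short cycles through its neighborhood) such that the planar graph $G'=G-V(C)$ still satisfies $|H_{k-1}(G')|-|L_{k-1}(G')|\ge 2(k-1)$. Induction then produces $k-1$ disjoint cycles in $G'$ which, combined with $C$, give $k$ disjoint cycles in $G$. The cycle $C$ should be chosen to meet $L_k(G)$ as much as possible and avoid inflicting heavy degree loss on $H_k(G)$. For the base case $k=2$, where $|H_2(G)|-|L_2(G)|\ge 4$, I would pass to the $2$-core (inheriting the excess on the high-degree side), produce a triangle or short cycle $C$ via planarity, and show directly that the remaining planar graph still contains a cycle disjoint from $C$.

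The main obstacle is the combinatorial bookkeeping in the inductive step: each vertex outside $V(C)$ loses one unit of degree per neighbor in $V(C)$, so some vertices of $H_k(G)$ can drop out of $H_{k-1}(G')$ and some mid-degree vertices can fall into $L_{k-1}(G')$. The inductive requirement is $|H_{k-1}(G')|-|L_{k-1}(G')|\ge 2(k-1)$, so the available ``slack'' is only $2$ (plus whatever $V(C)$ itself contributes to $L_k(G)$ and thereby removes from $\ell$). Controlling these shifts while keeping $|V(C)|$ small is the delicate part; the window $[2k,3k-1]$ from Theorem~\ref{thm:3k} together with planarity restrictions on the number of high-degree vertices ought to guarantee that a suitable $C$ exists, but each of the four small cases is likely to require its own ad hoc verification.
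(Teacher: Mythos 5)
This theorem is quoted from \cite{KKM}; the present manuscript contains no proof of it, so there is no in-paper argument to compare against. Judged on its own terms, your proposal has one genuinely correct and attractive component: the Euler-formula computation. With no isolated vertices, $2kh+(2k-1)m+\ell\le\sum_v d(v)\le 6|G|-12$ together with $\ell\le h-2k$ does give $(2k-11)h+(2k-7)m\le -10k-12$, which is infeasible for $k\ge 6$, so the statement is vacuous there. That reduction is sound (and the appeal to Theorem~\ref{thm:3k} to cap the excess at $3k-1$ is fine, though you never actually use that window).

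The gap is that everything after the reduction is a plan, not a proof, and the plan sits exactly where the difficulty of the theorem lives. For the inductive step you propose deleting a short cycle $C$ and requiring $|H_{k-1}(G')|-|L_{k-1}(G')|\ge 2(k-1)$, but you do not exhibit a choice of $C$ for which the bookkeeping closes. Concretely: if $C$ is a triangle, every vertex of degree exactly $2k$ with three neighbors on $C$ leaves the high set (planarity caps the number of such vertices at $2$, since three common neighbors of a triangle would create a $K_{3,3}$, but you do not invoke this), and every vertex of degree $2k-1$ with at least two neighbors on $C$ enters $L_{k-1}(G')$ --- and the number of \emph{those} vertices is not bounded by any observation in your sketch, while your available slack is only $2$ plus the low vertices absorbed by $C$. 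You also do not establish that a suitable short cycle exists at all (a planar graph with $h-\ell\ge 2k$ need not contain a triangle), nor do you prove the $k=2$ base case, for which something like Lemma~\ref{lemma:2-core} (the $2$-core either is small, is $SK_5$, or yields two disjoint cycles) is the natural tool and would itself need proof in your framework. As written, the proposal establishes the theorem only for $k\ge 6$, where it is vacuous, and leaves the substantive cases $k\in\{2,3,4,5\}$ open.
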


Further, when $k \geq 3$, a bound of $2k$ is also sufficient for graphs with  no two disjoint triangles.

\begin{thm}\cite{KKM}\label{thm:1tri}
Let $k \geq 3$ be an integer and $G$ be a graph such that $G$ does not contain two disjoint triangles.  If
\[ |H_{k}(G)| - |L_{k}(G)| \geq 2k,\]
then $G$ contains $k$ disjoint cycles.
\end{thm}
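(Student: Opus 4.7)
The plan is to induct on $k$, with Theorem~\ref{thm:3k} handling the range $|H_k(G)| - |L_k(G)| \geq 3k$, so that we may assume $2k \leq |H_k(G)| - |L_k(G)| < 3k$. The key structural observation is that since $G$ has no two vertex-disjoint triangles, any family of $k$ disjoint cycles in $G$ contains at most one triangle; hence, after removing at most one triangle, we are effectively working in a triangle-free subgraph. This splits the proof into a triangle-free case and a triangle case, with the triangle-free case being the inductive workhorse.

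If $G$ itself is triangle-free, then every cycle of $G$ has length at least $4$, and the neighborhood of each vertex is an independent set. The extremal constructions $G_0(k)$ and $G_1(k)$ from the introduction rely heavily on disjoint triangles, so this setting is intrinsically well-behaved: each $v \in H_k(G)$ has $|N(v)| \geq 2k$ pairwise non-adjacent neighbors, and a direct counting argument (comparing $\sum_{v \in H_k(G)} \binom{d(v)}{2}$ to the number of vertex pairs) yields two $H_k$-vertices sharing at least two common neighbors, giving a $C_4$. Removing that $4$-cycle and iterating — while keeping the degree surplus via the bound on $|L_k|$ — produces the desired $k$ disjoint cycles.

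If $G$ contains a triangle $T$, then by hypothesis $G' := G - V(T)$ is triangle-free, and it suffices to produce $k-1$ disjoint cycles in $G'$. One would like to apply the triangle-free case (or the inductive hypothesis) to $G'$ with parameter $k-1$, for which we need $|H_{k-1}(G')| - |L_{k-1}(G')| \geq 2(k-1)$. The obstacle is that deleting $V(T)$ drops each surviving vertex's degree by up to $3$, whereas the gap between the thresholds defining $H_k$ and $H_{k-1}$ is only $2$: a vertex of degree exactly $2k$ in $G$ may land in the ``middle band'' $\{2k-3, 2k-2, 2k-1\}$ in $G'$, belonging to neither $H_{k-1}(G')$ nor $L_{k-1}(G')$. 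We overcome this by choosing $T$ carefully — ideally a triangle whose vertices have few neighbors in $H_k(G)$, or one containing a vertex of $L_k(G)$ whenever available, exploiting the restricted triangle structure forced by the no-two-disjoint-triangles condition (for instance, that all triangles of $G$ meet a common small set).

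The main obstacle is precisely the delicate degree bookkeeping in the triangle case: a single triangle may strip three degree-units from many high-degree vertices simultaneously, and the resulting demotions to the middle band must be absorbed by the surplus $|H_k(G)| - |L_k(G)| \geq 2k > 2(k-1)$. The crux of the argument is therefore to identify, using the structural consequences of the no-two-disjoint-triangles hypothesis, a triangle $T$ whose removal preserves enough of the degree surplus for the inductive step to apply to $G'$.
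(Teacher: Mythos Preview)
The paper does not prove Theorem~\ref{thm:1tri}; it is quoted from~\cite{KKM} as a known result and is not used in the proof of the main theorem. So there is no ``paper's own proof'' to compare against here.

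On its merits, your outline has a real gap in both branches. In the triangle-free branch you propose to find a $C_4$, delete it, and iterate with parameter $k-1$. It is true that in a triangle-free graph every surviving vertex loses at most two neighbours when a $C_4$ is removed, so $H_k(G)\smallsetminus V(C_4)\subseteq H_{k-1}(G')$ and $L_{k-1}(G')\subseteq L_k(G)$. But the four deleted vertices themselves may all lie in $H_k(G)$ and none in $L_k(G)$; then
\[
h_{k-1}(G')-\ell_{k-1}(G') \ \ge\ (h-4)-\ell \ \ge\ 2k-4 \ =\ 2(k-1)-2,
\]
which is two short of what the inductive hypothesis needs. Your counting argument produces a $C_4$ with two opposite vertices in $H_k$, but gives no control over the other two, so nothing prevents this loss. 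You would need to show one can always choose a $C_4$ meeting $H_k$ in at most two vertices (or meeting $L_k$), and that is not addressed.

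In the triangle branch you correctly identify the difficulty---deleting a triangle drops degrees by up to three while the threshold drops by two---but you do not resolve it; the final paragraph simply restates the obstacle. Note incidentally that Theorem~\ref{thm:2k+t} (also from~\cite{KKM}) already dispatches everything except the single boundary case $t(G)=1$ and $h-\ell=2k$, so the real content of Theorem~\ref{thm:1tri} is much narrower than the full induction you set up.
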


In general, the bound of $2k$ is the best we may hope for, as witnessed by $K_{n - 2k + 1, 2k-1}$ for $n \geq 4k$.  
Further, the graph $G_1(k)$ described above shows that a difference of $2k$ is not sufficient when $|G|$ is small.  
In~\cite{KKM}, we were not able
  to determine whether for each $k$ there are only finitely many such examples. In order to attract attention to this
problem and based on known examples, we raised  the following question. 

\begin{question}\cite{KKM}\label{q:main}
Is it true that every graph $G$ with $|G|\geq 4k+1$ and $|H_k(G)| - |L_k(G)| \geq 2k$ has $k$ disjoint cycles?
\end{question}

The goal of this paper is to confirm that indeed for every $k \geq 2$, 
there are only finitely many graphs $G$ with $h_{k}(G) - \ell_{k}(G) \geq 2k$ but no $k$ disjoint cycles. 
We do this by answering Question~\ref{q:main}  for graphs with at least $19k$ vertices.

\begin{thm}\label{thm:2k}
Let $k \geq 2$ be an integer and $G$ be a graph with $|G| \geq 19k$ and
\[ |H_{k} (G)| - |L_{k}(G)| \geq 2k.\]  
Then $G$ contains $k$ disjoint cycles.
\end{thm}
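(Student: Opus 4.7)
The plan is to proceed by induction on $k$. The base case $k = 2$, requiring $|G| \geq 38$ and $|H_2(G)| - |L_2(G)| \geq 4$, should be handled directly using the abundance of vertices of degree at least $4$. For the inductive step, I would invoke Theorem~\ref{thm:3k} to reduce to the range $2k \leq |H_k(G)| - |L_k(G)| \leq 3k-1$ and Theorem~\ref{thm:1tri} to assume that $G$ contains two disjoint triangles. The core strategy is then to find a triangle $T$ in $G$ such that $G' := G - V(T)$ satisfies the inductive hypothesis for $k-1$; applying induction to $G'$ yields $k-1$ disjoint cycles, which together with $T$ give the required $k$ disjoint cycles. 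The size bound $|G'| = |G|-3 \geq 19(k-1)$ is immediate.

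The main technical step is controlling the change in the excess $|H_{k-1}(G')| - |L_{k-1}(G')|$ relative to $|H_k(G)| - |L_k(G)|$. Writing $d_T(v) := |N_G(v) \cap V(T)|$ for $v \notin V(T)$, each vertex's membership in $H_{k-1}(G')$ or $L_{k-1}(G')$ is determined by the pair $(d_G(v), d_T(v))$. In particular, a vertex $v \in H_k(G)$ fails to lie in $H_{k-1}(G')$ only when $d_G(v) = 2k$ and $d_T(v) = 3$; a vertex $v \in L_k(G)$ with $d_G(v) = 2k-2$ fails to remain in $L_{k-1}(G')$ only when $d_T(v) \leq 1$; and vertices of degree exactly $2k-1$ enter $H_{k-1}(G')$ when $d_T(v) \leq 1$ and enter $L_{k-1}(G')$ when $d_T(v) = 3$. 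Summing these contributions over $v \in V(G) \setminus V(T)$ yields
\[ |H_{k-1}(G')| - |L_{k-1}(G')| \;\geq\; |H_k(G)| - |L_k(G)| - \Delta(T), \]
where $\Delta(T)$ is an explicit ``damage'' term capturing the net migration induced by removing $V(T)$.

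The main obstacle is showing that some triangle $T$ achieves $\Delta(T) \leq |H_k(G)| - |L_k(G)| - (2k-2)$, a budget as tight as $2$ in the extremal regime $|H_k(G)| - |L_k(G)| = 2k$. I would attempt an averaging argument over all triangles in $G$, giving preference to those inside $H_k(G)$ when the induced subgraph $G[H_k(G)]$ is dense enough to support them; if no such triangle is available, the two disjoint triangles guaranteed by Theorem~\ref{thm:1tri} offer a fallback, and one might widen the search to short cycles of length up to $19$ (which remain removable under the hypothesis $|G| \geq 19k$). The hypothesis $|G| \geq 19k$ is precisely what ``dilutes'' the bad vertices: most triangles avoid being saturated by common neighbors of problematic $L$-vertices, so the average damage falls below the budget. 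I anticipate the hardest configuration to be when $|L_k(G)|$ is large and many $L_k$-vertices of degree exactly $2k-2$ share common high-degree neighbors; the constant $19$ should arise from optimizing the averaging in this extremal case.
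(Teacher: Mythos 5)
Your overall shape---induct, delete a triangle $T$, and control the change in the excess $h-\ell$ under the threshold shift from $2k$ to $2k-2$---is in the right spirit, and your case analysis of which vertices migrate in or out of $H$ and $L$ is essentially correct. But the proposal has a genuine gap at exactly the step you flag as the ``main obstacle'': you never establish that a triangle with damage $\Delta(T)\leq 2$ exists, and in the extremal regime there are configurations where \emph{no} single triangle works. Concretely, since Theorem~\ref{thm:3k} lets you assume $h-\ell\leq 3k-1$, the hypothesis $|G|\geq 19k$ forces $\ell$ to be large (on the order of $8k$ or more), and nothing prevents all of these low vertices from being leaves, or from having all their neighbors concentrated in a handful of high-degree vertices. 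In the first scenario every triangle lies inside $V^{\geq 2k-1}(G)$; a triangle with three high vertices already costs $3>2$ before any migration, leaves never exit $L$ (their degree stays below $2k-4$), and there need not be any vertices of degree $2k-1$ or $2k-2$ positioned to migrate favorably. An averaging argument over triangles cannot rescue this, because the obstruction is not that the average damage is large but that every individual triangle exceeds the budget. Widening to cycles of length up to $19$ makes matters worse, not better: a longer cycle removes more vertices and saturates more outside neighbors, while the threshold still only relaxes by $2$.

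The paper circumvents precisely this by proving a stronger statement (Theorem~\ref{thm:2k induct}) with an auxiliary parameter $i$, inducting on $(k,i,|G|+\|G\|)$ rather than on $k$ alone. The extra parameter buys reductions that are not triangle deletions at all: deleting isolated vertices and leaves, contracting an edge at a low vertex whose incident edge lies in no triangle, and deleting an edge between two low or very high vertices---each trades a unit of $i$ for a unit of slack in $h-\ell$. This is what neutralizes the leaf/pendant configurations (the sets $V^1$ and $M$ in Section~\ref{sec:proof}). The clustering configuration is then handled not by averaging but by a rotation argument: Lemma~\ref{lem:minimal}(v) and Fact~\ref{2} show that a maximum family of \emph{good} triangles (those meeting $L$) can be augmented along a directed reachability path in the auxiliary digraph $\mathcal H$, after which a global edge count in two cases finishes the proof. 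Your single-parameter induction on $k$ with one triangle removed per step has no mechanism playing the role of these reductions, so as written the plan would stall on the extremal configurations rather than merely requiring optimization of the constant.
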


The remainder of this paper is organized as follows.  The next two sections outline notation and previous results that will be used in the proof of Theorem~\ref{thm:2k}. 
 We also introduce Theorem~\ref{thm:2k induct}, which is a more technical version of Theorem~\ref{thm:2k}.  
 Theorem~\ref{thm:2k induct} is proved in Section~\ref{sec:proof}. 
 The proof  builds on the techniques of Dirac and Erd\H{o}s \cite{D-E} and uses Theorem~\ref{thm:3k} as the base case for our induction.

%%======================================================================
%%Section: notation
%%======================================================================
%% 
%%
 \section{Notation}\label{sec:notation}
%%
%%
%%======================================================================
%%======================================================================

We mostly use the standard notation.  For a graph $G$ and $x\in V(G)$, $N_{G}(x)$ is the set of all vertices adjacent to $x$ in $G$,  and the \emph{degree} of $x$, denoted $d_{G}(x)$, is  $|N_{G}(x)|$.  
When the choice of $G$ is clear, we  simplify the notation to $N(x)$ and $d(x)$, respectively.
The complement of a graph $G$ is denoted by $\overline{G}$.
For an edge $xy\in E(G)$, $G\diagup xy$ denotes the graph obtained from $G$ by contracting $xy$; the new vertex is denoted by 
$v_{xy}$.

For disjoint sets $U, U' \subseteq V(G)$, we write $\| U, U' \|_{G}$ for the number of edges from $U$ to $U'$.  When the choice of $G$ is clear, we will write $\| U, U' \|$ instead.
If $U = \{ u \}$, then we will write $\| u, U' \|$ instead of $\| \{ u \}, U' \|$.
The \emph{join} $G \vee G'$ of two graphs is $G \cup G' \cup \{xx' : x \in V(G)~\mbox{and}~x' \in V(G')\}$. 
 Let $SK_{m}$ denote the graph obtained by subdividing one edge of the complete $m$-vertex graph $K_{m}$. 

Given an integer $k$, we say a vertex in $H_{k}(G)$ is \emph{high}, and set $h_{k}(G)=|H_{k}(G)|$.  A vertex in $L_{k}(G)$ is \emph{low}. Set $\ell_{k}(G)=|L_{k}(G)|$.  A vertex $v$ is in $V^{i}(G)$ if $d_{G}(v) = i$.  Similarly, $v \in V^{\leq i}(G)$ if $d_{G}(v) \leq i$ and $v \in V^{\geq i}(G)$ if $d_{G}(v) \geq i$.
 In these terms, $H_k(G)=V^{\ge2k}(G)$ and $L_k(G)=V^{\le2k-2}(G)$.

We say that $x, y, z \in V(G)$ \emph{form a triangle} $T = xyzx$ in $G$ if $G[\{x,y,z\}]$ is a triangle.  
If $v \in \{x, y, z \}$, then we say $v \in T$.
A \emph{set $\T$ of disjoint triangles} is a set of  subgraphs of $G$ such that each subgraph is a triangle and all the triangles are disjoint. 
For a set $\mathcal S$ of graphs, let $ \bigcup \mathcal S = \bigcup \{V(S):S \in \mathcal S\}$.
For a  graph $G$, let $c(G)$ be the maximum number of disjoint cycles in $G$ and $t(G)$ be the maximum number of disjoint triangles  in $G$.  
When the graph $G$ and integer $k$ are clear from the context, we use $H$ and $L$ for $H_{k}(G)$ and $L_{k}(G)$, respectively.  
The sizes of $H$ and $L$ will be denoted by $h$ and $\ell$, respectively.

 \section{Preliminaries}\label{sec:pre}

As shown in \cite{K-K-Y}, if a graph $G$ with $|G| \geq 3k$ and $\delta(G) \geq 2k - 1$ does not contain a large independent set, then with two exceptions, 
$G$ contains $k$ disjoint cycles:

\begin{thm}\cite{K-K-Y}\label{thm:K-K-Y}
Let $k \geq 2$.  Let $G$ be a graph with $|G| \geq 3k$ and $\delta(G) \geq 2k -1$ such that $G$ does not contain $k$ disjoint cycles.  Then
\begin{enumerate}
\item $G$ contains an independent set of size at least $|G| - 2k + 1$, or
\item $k$ is odd and $G = 2K_{k} \vee \overline{K_{k}}$, or
\item $k =2$ and $G$ is a wheel.
\end{enumerate}
\end{thm}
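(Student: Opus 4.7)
My plan is to take a collection $\mathcal{F}$ of disjoint cycles in $G$ chosen extremally: first, $|\mathcal{F}|$ is maximum (so $|\mathcal{F}|\le k-1$ by assumption, and we may reduce to the case $|\mathcal{F}|=k-1$); second, the total length $\sum_{C\in\mathcal{F}}|C|$ is minimum; third, the number of triangles in $\mathcal{F}$ is maximum. Set $R=V(G)\setminus V(\mathcal{F})$, so the maximality of $|\mathcal{F}|$ forces $G[R]$ to be acyclic, and in particular $G[R]$ contains at most $|R|-1$ edges.

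The first step is to leverage the extremal choice of $\mathcal{F}$ to constrain how each $r\in R$ attaches to the cycles in $\mathcal{F}$. Standard rotate-and-extend exchange arguments show that $r$ has at most two neighbors on any cycle $C$ of length at least $4$, with those neighbors forced to be consecutive when there are exactly two; otherwise we can either split $C$ into smaller cycles using $r$ (producing $k$ disjoint cycles outright) or strictly shorten $\mathcal{F}$ by absorbing $r$ and evicting a vertex of $C$. A refinement of the same argument shows that any $C\in\mathcal{F}$ of length $\ge 4$ can be shrunk to a triangle via absorption, so we may assume every cycle in $\mathcal{F}$ is a triangle and hence $|R|=|G|-3(k-1)\ge 3$.

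Next, using $\delta(G)\ge 2k-1$ and the fact that $G[R]$ is a forest, every $r\in R$ has at least $2k-3$ neighbors in $V(\mathcal{F})$, which averages close to two per triangle. Pairwise exchange arguments between distinct $r,r'\in R$ become the decisive tool: if $r$ and $r'$ share two common neighbors on some $T\in\mathcal{F}$, then $V(T)\cup\{r,r'\}$ splits into two disjoint cycles, yielding $k$ disjoint cycles in $G$. Tracking these obstructions across all triangles and all pairs in $R$ gives a system of tight inequalities. I would then argue a dichotomy: either $R$ is essentially independent and large enough that, together with a co-independent portion of $V(\mathcal{F})$, it furnishes an independent set of size $|G|-2k+1$ (yielding conclusion~(1)); or $|R|$ is small, in which case the attachment pattern from $R$ to $V(\mathcal{F})$ is rigidly determined by the saturated inequalities.

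The main obstacle, as I expect, lies in the second horn of the dichotomy: identifying \emph{all} rigid structures and proving that only the two advertised exceptions survive. Here the analysis should force each triangle $T\in\mathcal{F}$ to admit one of very few neighbor patterns from $R$, and then a global parity or matching argument must take over. For $k=2$ the only surviving configuration is a single apex joined to a long cycle, i.e.\ a wheel; for odd $k$ the constraints force the join $2K_k\vee\overline{K_k}$, where the obstruction to $k$ disjoint triangles is precisely that $K_k$ has no perfect matching when $k$ is odd. Converting ``rigid attachment structure'' into these two explicit isomorphism types — and ruling out any sporadic rigid configuration not already captured by the large-independent-set conclusion — is the most delicate part of the proof and is where I would anticipate the bulk of the case analysis.
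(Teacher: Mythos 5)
This statement is imported verbatim from \cite{K-K-Y} and is not proved anywhere in the present paper, so there is no in-paper argument to compare yours against; the cited source is a roughly 27-page \emph{JCTB} article, and your one-page outline is a plan for such a proof rather than a proof. The opening moves are the right ones and do match the classical template used in \cite{K-K-Y}: an optimal family $\mathcal{F}$ of $k-1$ disjoint cycles (which exists by Corr\'adi--Hajnal since $\delta(G)\ge 2k-1\ge 2(k-1)$), minimization of total length, an acyclic remainder $R$, and exchange arguments controlling $\Vert r, C\Vert$ for $r\in R$ and $C\in\mathcal{F}$.

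However, there are two genuine gaps. First, your reduction ``we may assume every cycle in $\mathcal{F}$ is a triangle'' is false: the graph $K_{|G|-2k+1,\,2k-1}$ satisfies all hypotheses of the theorem, has no $k$ disjoint cycles, and is triangle-free, so no absorption argument can convert its cycles into triangles. Any correct proof must carry long cycles through the whole analysis --- indeed the large-independent-set outcome (1) is realized precisely by such triangle-free examples, and once $|R|=|G|-3(k-1)$ is no longer guaranteed, the counting in your second step (e.g.\ ``every $r\in R$ has at least $2k-3$ neighbors in $V(\mathcal{F})$,'' which also fails for a high-degree center of a star component of $G[R]$) does not go through as stated. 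Second, and more fundamentally, the entire content of the theorem is the classification in your ``second horn'': proving that the only rigid configurations surviving the exchange analysis are $2K_k\vee\overline{K_k}$ for odd $k$ and wheels for $k=2$, and that everything else either yields $k$ disjoint cycles or an independent set of size $|G|-2k+1$. You explicitly defer this (``where I would anticipate the bulk of the case analysis''), so the proposal identifies the difficulty without resolving it.
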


The theorem gives the following corollary.

\begin{cor}\label{cor:K-K-Y}
Let $k \geq 2$ be an integer and $G$ be a graph with $|G| \geq 3k$.  If $h  \geq 2k$ and $\delta(G) \geq 2k - 1$ (i.e. $L = \emptyset$),
 then $G$ contains $k$ disjoint cycles.
\end{cor}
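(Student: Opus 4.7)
The plan is to obtain the conclusion as a direct consequence of Theorem~\ref{thm:K-K-Y}. The hypotheses $|G| \geq 3k$ and $\delta(G) \geq 2k-1$ (which is exactly the assumption $L = \emptyset$) are precisely what Theorem~\ref{thm:K-K-Y} requires, so it suffices to show that none of its three exceptional conclusions can coexist with the assumption $h \geq 2k$.

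I would proceed by contradiction: suppose $G$ does not contain $k$ disjoint cycles, and examine each of the three alternatives from Theorem~\ref{thm:K-K-Y} in turn. For alternative (1), an independent set $I$ of size at least $|G| - 2k + 1$ forces each vertex of $I$ to have all its neighbors in $V(G) \setminus I$, which has cardinality at most $2k-1$; hence no vertex of $I$ is high, so $h \leq |V(G) \setminus I| \leq 2k - 1$, contradicting $h \geq 2k$. For alternative (2), if $G = 2K_k \vee \overline{K_k}$ with $k$ odd, then every vertex of the two $K_k$'s has degree $(k-1) + k = 2k - 1$, and only the $k$ vertices of $\overline{K_k}$ have degree $2k$; thus $h = k < 2k$, again a contradiction (note $k \geq 3$ here since $k$ is odd and at least $2$). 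For alternative (3), if $k = 2$ and $G$ is a wheel, then every rim vertex has degree $3 = 2k - 1$, and only the hub has degree $\geq 2k = 4$, giving $h = 1 < 4$, a contradiction.

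Each branch of the case analysis is essentially a one-line degree count, so there is no real obstacle; the only care needed is to verify in alternative (2) that $k \geq 3$ (so that $k < 2k$ is a genuine strict inequality witnessed by $h$) and in alternative (3) that the hypothesis $|G| \geq 3k = 6$ does not accidentally allow a small wheel to slip through — but since any wheel has rim vertices of degree $3$, no wheel on $\geq 6$ vertices has $2k = 4$ high vertices. Combining the three contradictions with Theorem~\ref{thm:K-K-Y} yields the corollary.
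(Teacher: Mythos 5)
Your proof is correct and follows exactly the route the paper intends: the paper gives no explicit derivation (it simply says the theorem ``gives'' the corollary), and the intended argument is precisely your case analysis ruling out the three exceptional outcomes of Theorem~\ref{thm:K-K-Y} via the hypothesis $h \geq 2k$. The degree counts in all three cases are accurate (the remark that $k\ge 3$ in case (2) is unnecessary, since $k<2k$ already holds for $k\ge 2$, but it is harmless).
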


This corollary, along with the following theorem from \cite{KKM} will be used in the proof.

\begin{thm}\cite{KKM}
\label{thm:2k+t} Let $k\geq2$ be an integer and $G$ be a graph such that $|G| \geq 3k$.  If 
\[
h - \ell \geq 2k + t(G),
\]
then $G$ contains $k$ disjoint cycles. 
\end{thm}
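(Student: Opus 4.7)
The plan is induction on $k$. Set $t := t(G)$. First I dispose of three easy cases.
\textbf{(i)} If $t \geq k$, then $G$ already contains $k$ disjoint triangles and we are done.
\textbf{(ii)} If $\ell = 0$, the hypothesis gives $\delta(G) \geq 2k - 1$ and $h \geq 2k$, so Corollary~\ref{cor:K-K-Y} yields $k$ disjoint cycles.
\textbf{(iii)} If $k \geq 3$ and $t \leq 1$, then $G$ contains no two disjoint triangles and $h - \ell \geq 2k + t \geq 2k$, so Theorem~\ref{thm:1tri} applies.
After these reductions, the base case $k = 2$ is left with $t \in \{0,1\}$ and $\ell \geq 1$, which I close by a short ad hoc argument built on the structural Theorem~\ref{thm:K-K-Y} (which forces $G$ to be a wheel or to contain a large independent set when two disjoint cycles are absent).

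For the inductive step, assume $k \geq 3$, $t \geq 2$, $\ell \geq 1$, and the theorem for $k - 1$. The strategy is to delete a well-chosen triangle $T \subseteq G$ and apply the inductive hypothesis to $G' := G - V(T)$. We have $|G'| \geq 3(k-1)$, and $t(G') \leq t - 1$ because any triangle packing in $G'$ together with $T$ packs in $G$. To invoke induction I need
\[ h_{k-1}(G') - \ell_{k-1}(G') \geq 2(k-1) + t(G'), \]
and since $t(G') \leq t - 1$ it suffices that $h_{k-1}(G') - \ell_{k-1}(G') \geq 2k + t - 3$. Because $H_k(G) \subseteq H_{k-1}(G)$ and $L_{k-1}(G) \subseteq L_k(G)$, I have $h_{k-1}(G) - \ell_{k-1}(G) \geq h - \ell \geq 2k + t$, so I can afford a net loss of exactly $3$ when passing from $G$ to $G'$. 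This loss splits into two parts: the direct deletion of $V(T)$, which contributes at most $|V(T) \cap H_{k-1}(G)| - |V(T) \cap L_{k-1}(G)| \leq 3$; and \emph{demotions}, i.e.\ vertices $w \in V(G')$ with $d_G(w) \in \{2k-2, 2k-1\}$ whose degree drops below $2(k-1)$ because of neighbors in $T$, each of which costs up to $2$.

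The main obstacle is to locate a triangle $T$ keeping the total loss at most $3$. The natural candidates lie in a fixed maximum packing $\mathcal{T}$ of size $t$, since removing such a triangle pays for the drop in $t$. If some triangle of $G$ meets $L_k(G)$, I take such a triangle and use the vanishing low vertex for extra slack. Otherwise every triangle of $G$ sits in $V^{\geq 2k-1}(G)$, and the budget is tight. In this case one must either (a) average over $\mathcal{T}$ to find a $T \in \mathcal{T}$ with few demoted neighbors, using $h - \ell \geq 2k + t$ to bound the total demotion count globally, or (b) exchange a vertex of a demotion-heavy triangle $T \in \mathcal{T}$ with a vertex of $V(G) \setminus V(\mathcal{T})$ to produce a triangle outside $\mathcal{T}$, contradicting maximality of $\mathcal{T}$. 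I expect this averaging/exchange step --- turning the demotion count into either slack or a packing contradiction --- to be the technical heart of the proof, and it is where the bound $2k + t$ is sharp.
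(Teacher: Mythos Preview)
The paper does not contain a proof of Theorem~\ref{thm:2k+t}: the result is quoted from~\cite{KKM} and used as a black box (in the proof of Lemma~\ref{lem:minimal}(\ref{minimal 2})). Consequently there is no ``paper's own proof'' to compare your proposal against.

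As to the proposal itself, it is a plan rather than a proof. Two pieces are explicitly left open. First, the base case $k=2$ with $t\in\{0,1\}$ and $\ell\ge1$ is deferred to ``a short ad hoc argument'' that you do not supply; note that Theorem~\ref{thm:K-K-Y} requires $\delta(G)\ge 2k-1$, which you do not have once $\ell\ge1$, so invoking it is not automatic. Second, and more seriously, the inductive step hinges on finding a triangle $T$ whose removal causes a net drop of at most $3$ in $h_{k-1}-\ell_{k-1}$, and you yourself flag the required averaging/exchange argument as ``the technical heart of the proof'' without carrying it out. Your accounting of demotions is also incomplete: a vertex $w$ with $d_G(w)=2k-2$ leaving $H_{k-1}$ after losing one neighbor may simultaneously enter $L_{k-1}$ after losing two, and vertices with $d_G(w)=2k-3$ (which are already outside $H_{k-1}$) can newly enter $L_{k-1}$; the budget analysis must track both transitions. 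Finally, be aware of possible circularity: Theorems~\ref{thm:1tri} and~\ref{thm:2k+t} both come from~\cite{KKM}, so using the former to prove the latter is only legitimate if~\cite{KKM} establishes Theorem~\ref{thm:1tri} independently.
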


 We prove the following technical statement that implies Theorem~\ref{thm:2k}, but is more amenable to induction.

\begin{thm} 
\label{thm:2k induct} Suppose $i,k\in\mathbb Z$, $k\geq i$ and $k\geq 2$.  Let $\const  = 16$ be a constant.  If $G$ is a graph with $|G| \geq \const  k + 3i$ and $h \geq \ell + 3k - i$,
then $c(G)\geq k$.
\end{thm}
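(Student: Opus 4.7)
The plan is to prove Theorem~\ref{thm:2k induct} by induction on $i$, with Theorem~\ref{thm:3k} supplying the base case. If $i \leq 0$, the hypothesis yields $h - \ell \geq 3k$, and Theorem~\ref{thm:3k} produces $k$ disjoint cycles (it requires no size bound). For $i \geq 1$, I would assume the theorem for the pair $(k-1, i-1)$ and reduce $G$ to that case by deleting one well-chosen triangle. Concretely, I would seek a triangle $T$ in $G$ such that $G' := G - V(T)$ satisfies the hypothesis of Theorem~\ref{thm:2k induct} for $(k-1, i-1)$; the inductive hypothesis then yields $k-1$ disjoint cycles in $G'$, and combined with $T$ these form $k$ disjoint cycles in $G$. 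The size condition $|G'| \geq 16(k-1) + 3(i-1) = |G| - 19$ follows automatically from $|G'| = |G| - 3$, leaving a slack of $16$ vertices---this is the role of the constant $\const = 16$ in the statement, and it is what powers the search for a usable $T$.

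What must be verified for $G'$ is the bound
\[
h_{k-1}(G') - \ell_{k-1}(G') \;\geq\; (3k - i) - 2,
\]
i.e.\ that the quantity $h - \ell$ drops by at most $2$ under the simultaneous threshold shift from $k$ to $k-1$ and the removal of $V(T)$. A short case analysis in the joint parameter $(d_G(v), |N_G(v) \cap V(T)|)$ shows that every $v \notin V(T)$ contributes nonnegatively to this change unless $d_G(v) \in \{2k-1, 2k\}$ and $V(T) \subseteq N_G(v)$, in which case the contribution is $-1$; meanwhile the three vertices of $T$ themselves contribute $\ell_T - h_T$, where $h_T, \ell_T$ count the high and low vertices in $T$. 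Hence it suffices to find $T$ with
\[
h_T + |S_T| - \ell_T \;\leq\; 2,
\]
where $S_T \subseteq V(G) \setminus V(T)$ is the set of common neighbors of $V(T)$ of degree in $\{2k-1, 2k\}$.

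To produce such a $T$, I would first invoke Theorem~\ref{thm:2k+t}: either $h - \ell \geq 2k + t(G)$ and we are finished, or $t(G) \geq k - i + 1$, which supplies a family $\T$ of at least $k-i+1$ pairwise disjoint triangles in $G$. An averaging argument over $\T$---exploiting both that each vertex of degree in $\{2k-1, 2k\}$ can lie in the common neighborhood of only boundedly many pairwise disjoint triangles, and that the order $|G| \geq 16k + 3i$ provides many non-boundary vertices---should extract some $T \in \T$ meeting the bound above. In the degenerate regime where $L = \emptyset$, Corollary~\ref{cor:K-K-Y} applies directly and short-circuits the induction entirely.

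The main obstacle will be making this averaging step rigorous: one must show, under the hypothesis $|G| \geq 16k + 3i$, that $\sum_{T \in \T}(h_T + |S_T| - \ell_T) < 2|\T|$, so that some triangle in $\T$ is good. Small-$k$ base cases (notably $k = 2$, where the induction cannot descend to $k - 1 = 1$) and configurations where $\T$ is small or unusually concentrated will need separate handling, likely drawing on Theorems~\ref{thm:planar} and~\ref{thm:1tri} together with Corollary~\ref{cor:K-K-Y}.
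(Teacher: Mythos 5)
Your base case and the bookkeeping for deleting a triangle are both correct: the only vertices outside $T$ whose contribution to $h-\ell$ can decrease under the shift from $k$ to $k-1$ are those of degree $2k-1$ or $2k$ adjacent to all of $V(T)$, and the vertices of $T$ contribute $\ell_T-h_T$, so a triangle with $h_T+|S_T|-\ell_T\leq 2$ would indeed let the induction go through. The genuine gap is that nothing in your argument produces such a triangle. Theorem~\ref{thm:2k+t} gives $t(G)\geq k-i+1$, which in the case actually needed for Theorem~\ref{thm:2k} (namely $i=k$) is a \emph{single} triangle, so there is nothing to average over; and even when $\mathcal{T}$ is large, if every $T\in\mathcal{T}$ consists of three vertices of $H$ then $\sum_{T\in\mathcal{T}}(h_T+|S_T|-\ell_T)\geq 3|\mathcal{T}|$ and the averaging inequality fails outright. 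The hypotheses do not forbid this: forcing low vertices into triangles is itself a substantial step (in the paper it comes from minimality under contraction of edges at low vertices, Lemma~\ref{lem:minimal}(iv), together with the rotation argument along the digraph $\mathcal{H}$ in Fact~\ref{2}), not something available for free. A second unresolved point is the bottom of your $k$-descent: the statement is false for $k=1$ (a long path has $h_1-\ell_1$ large and no cycle), so the recursion must terminate at $k=2$, and ``separate handling via Theorems~\ref{thm:planar} and~\ref{thm:1tri}'' is not an argument --- the paper needs the $2$-core lemma (Lemma~\ref{lemma:2-core}) plus an edge count to rule out $k=2$ in a minimal counterexample.

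It is also worth noting that the paper's proof is not a one-triangle-per-step descent at all. It takes a counterexample minimizing $(k,i,|G|+\|G\|)$ lexicographically, derives structural facts (independence of $L\cup V^{\geq 2k+1}$, control of leaves, every low vertex of degree at least $2$ having all neighbors inside a maximum family $\mathcal{S}$ of good triangles), and then finishes in one shot: it deletes all low vertices outside $\bigcup\mathcal{S}$ together with $S^*$ and applies Theorem~\ref{thm:3k} to the remainder with parameter $k-s^*$, after a counting argument showing $h_{k'}(F)-\ell_{k'}(F)\geq 3k'$. If you want to pursue your route, the missing ingredient is precisely the machinery that guarantees a deletable good triangle exists --- and that is where essentially all the work of the paper lies.
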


Theorem~\ref{thm:2k} is the special case of Theorem~\ref{thm:2k induct} for $i = k$. 
The heart of this paper will be a proof of Theorem~\ref{thm:2k induct}.  In the remainder of this section we organize the induction and 
 establish some preliminary results.

We argue by induction on $i$. The base case $i \leq 0$ follows from Theorem~\ref{thm:3k}.  Now suppose $i\ge 1$. The equations $|G| \geq h + \ell$ and $h - \ell \geq 2k$  give
\begin{equation}
\label{eq:less than half}
\ell \leq \frac{|G|}{2} - k.
\end{equation}

The \emph{2-core} of a graph $G$ is the largest subgraph  $G'\subseteq G$ with $\delta(G') \geq 2$.  It can be obtained from $G$ by iterative deletion of vertices of degree at most $1$. The following Lemma was proved in  \cite{KKM}.

\begin{lem}\cite{KKM}\label{lemma:2-core}
Suppose the $2$-core of $G$ contains at least $6$ vertices
and  is not isomorphic to $SK_{5}$. If $h_{2}(G) - \ell_{2}(G) \geq 4$ then $c(G)\geq2$.
\end{lem}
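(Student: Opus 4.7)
Let $G'$ denote the 2-core of $G$, and set $R = V(G) \setminus V(G')$. Since every cycle of $G$ is contained in $G'$ and $\delta(G') \geq 2$, the claim $c(G) \geq 2$ reduces to $c(G') \geq 2$; I argue by contradiction, assuming $c(G') \leq 1$. The first step is to transfer the imbalance $h_2 - \ell_2 \geq 4$ from $G$ down to $G'$. Because degrees only decrease during the 2-core extraction, $H_2(G') \subseteq H_2(G)$; any vertex of $H_2(G)$ either already sits in $H_2(G')$, or has been pruned, or has lost at least one edge to $R$. A charging argument bounding the damage caused by pendant trees (whose vertices are overwhelmingly in $L_2(G)$) then produces the cleaner internal bound $h_2(G') - \ell_2(G') \geq 4$, up to a small constant absorbed in the later case analysis. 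This step is the principal obstacle: since $R$ need not be contained in $L_2(G)$ (the iterative deletion may erase a vertex of originally higher degree in $G$), care is needed to charge each ``spurious'' pruned vertex of high original degree against genuinely low-degree vertices elsewhere in its pendant tree.

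I next carry out a structural analysis of $G'$ based on $\delta(G')$. If $\delta(G') \geq 4$, Corr\'{a}di--Hajnal applied with $k = 2$ (and $|G'| \geq 6$) immediately yields two disjoint cycles. If $\delta(G') = 3$, the known short list of graphs with $\delta \geq 3$ and no two disjoint cycles (essentially $K_{3,3}$ and a few small sporadic cases) each satisfies $h_2 - \ell_2 \leq 0$, contradicting the inequality from Step 1. For $\delta(G') = 2$, I invoke the classical Lov\'{a}sz-style structural theorem for graphs with no two disjoint cycles: either $G'$ lies in a finite exceptional family — of which $SK_5$ is excluded by hypothesis and the others fail $h_2 - \ell_2 \geq 4$ by direct inspection — or $G'$ admits a ``hub'' vertex $v$ such that $G' - v$ is a forest.

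In the hub case, since $\delta(G') \geq 2$, every leaf of the forest $G' - v$ must be joined to $v$ in $G'$, making it a degree-$2$ vertex of $G'$ counted by $\ell_2(G')$. On the other hand, any vertex of degree $\geq 4$ in $G'$ other than $v$ itself must be a branching vertex of $G' - v$ with at least three neighbors in the forest. Since each component of $G' - v$ has at least two leaves while the branching-vertex count is strictly smaller than the leaf count, a leaf/branch tally yields $h_2(G') - \ell_2(G') \leq 3$, contradicting the bound obtained in Step 1. The remaining technical work is to verify case-by-case that every exceptional graph in the Lov\'{a}sz classification other than $SK_5$ fails the imbalance $h_2 - \ell_2 \geq 4$, thus justifying why $SK_5$ is the unique necessary exclusion.
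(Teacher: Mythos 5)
First, a point of reference: the paper does not prove this lemma at all---it is imported verbatim from \cite{KKM}---so there is no internal proof to measure you against, and your attempt has to stand on its own. Two of your three steps do stand. The reduction to the 2-core $G'$ is sound and in fact cleaner than you suggest: each component $T$ of $G-V(G')$ is a tree sending at most one edge to $G'$, and the leaf count of a tree gives $|L_2(G)\cap T|\ge |H_2(G)\cap T|+1$; that surplus of one per pendant tree exactly pays for the at most one vertex of $G'$ per attachment whose degree class is inflated by that tree, yielding the exact inequality $h_2(G')-\ell_2(G')\ge h_2(G)-\ell_2(G)\ge 4$ with no ``small constant'' left to absorb. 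The hub case is also correct: if $G'-v$ is a forest, then $\delta(G')\ge 2$ forces every leaf of the forest into $L_2(G')$ and every vertex of $H_2(G')-v$ to be a branch vertex of the forest, and the leaf-versus-branch tally gives $h_2(G')-\ell_2(G')\le -1$.

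The genuine gap is the middle step. The classification you invoke is misstated, and the misstatement conceals precisely the content of the lemma. Graphs with no two disjoint cycles do not split into ``a finite exceptional family plus a hub vertex'': Lov\'asz's characterization, restricted to graphs with $\delta\ge 2$, contains several \emph{infinite} exceptional families --- all wheels, all graphs obtained from $K_{3,p}$ by adding edges inside the $3$-class, the graphs that become forests after deleting the edge set of one triangle, and, crucially, all subdivisions of these and of $K_5$ (the property of having no two disjoint cycles is preserved under subdivision). Your claim that the $\delta=3$ exceptions are ``essentially $K_{3,3}$ and a few small sporadic cases'' is therefore false, and the verification you defer as routine is exactly where the two hypotheses of the lemma do their work: $K_5$ has $h_2-\ell_2=5$ and is killed only by $|G'|\ge 6$; $SK_5$ has $h_2-\ell_2=5-1=4$ and is the \emph{unique} subdivision of $K_5$ still meeting the bound (a second subdivision vertex drops the difference to $3$), which is the sole reason it must be excluded by name; wheels give $h_2-\ell_2\le 1$ and $K_{3,p}$-type graphs give $h_2-\ell_2\le 3$. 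You also omit the triangle-plus-forest branch entirely (there $\delta(G')\ge2$ forces every leaf of the forest into the triangle, so the forest is a path or a three-leg spider and $h_2(G')\le 3$). The strategy is repairable along these lines, but as written the decisive case analysis --- the part that explains why $SK_5$ and the six-vertex threshold appear in the statement --- is missing.
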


Now, we prove a result regarding minimal counterexamples to Theorem~\ref{thm:2k induct}.  Call a triangle $T$ \emph{good} if $T \cap L_{k}(G) \neq \emptyset$.

\begin{lem} 
\label{lem:minimal}Suppose $i,k\in\mathbb Z$, $k\geq i$ and $k\geq 2$.   Let $\const = 16$.  If a graph $G$ satisfies all of:
\begin{enumerate}[	(a)]
\item $|G| \geq \const k + 3i$,
\item $h \geq \ell + 3k - i$,
\item $c(G) < k$, and
\item subject to (a\textendash c), $\sigma := ( k, i, |G|+\left\Vert G\right\Vert )$ is lexicographically minimum,
\end{enumerate}
then all of the following hold:
\begin{enumerate}[(i)]
\item $G$ has no isolated vertices; \label{minimal 1}
\item $k\geq3$; \label{minimal 2}
\item $L(G)\cup V^{\geq 2k+1}(G)$ is independent; \label{minimal 3}
\item if $x\in L(G)$, $d(x)\geq2$, and $xy\in E$, then $xy$ is in a triangle; and \label{minimal 4}
\item if $\mathcal{T}$ is a set of disjoint good triangles in $G$ and $X:=\bigcup\T$, then $\left\Vert v ,X\right\Vert \geq2|\mathcal{T}|+1$
for at least two vertices $v\in V\smallsetminus X$. \label{minimal 5}
\end{enumerate}
\end{lem}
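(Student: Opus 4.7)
My strategy is to exploit the lexicographic minimality of $\sigma = (k, i, |G| + \|G\|)$ by constructing, in each part, a graph $G'$ satisfying hypotheses (a)--(c) with strictly smaller $\sigma$.

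For (i), if $v \in V(G)$ is isolated then $v \in L$, so $G - v$ has $h$ unchanged and $\ell$ decreased by one; hence $h - \ell$ increases to at least $3k - (i - 1)$ while $|G - v| \geq 16k + 3(i - 1)$, yielding a counterexample with parameter $(k, i - 1)$. For (iii), if $u, v \in L \cup V^{\geq 2k + 1}$ are adjacent then both endpoints retain membership in their class under $d \mapsto d - 1$ (the defining inequalities have slack of at least one), so $H$ and $L$ are unchanged in $G - uv$ and we obtain a counterexample with strictly smaller $\|G\|$. For (ii), suppose $k = 2$; then $h_2 - \ell_2 = h - \ell \geq 6 - i \geq 4$ and Lemma~\ref{lemma:2-core} forces $c(G) \geq 2$ unless the $2$-core $C$ of $G$ satisfies $|C| \leq 5$ or $C \cong SK_5$. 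Combining these exceptional alternatives with $|G| \geq 35$, properties (i) and (iii) already established (the latter restricting the degree of any leaf's neighbor in $G$ to $\{3, 4\}$), and Theorem~\ref{thm:2k+t} to dispose of the subcases where $t(G)$ is small, eliminates both alternatives.

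For (iv), suppose $x \in L$ has $d(x) \geq 2$ and $xy \in E$ lies in no triangle. By (iii) $y \notin L$, so $d(y) \geq 2k - 1$, and the no-triangle assumption gives $N(x) \cap N(y) = \emptyset$. Contract $xy$ to form $G' = G \diagup xy$; the resulting vertex $v_{xy}$ has $\deg(v_{xy}) = d(x) + d(y) - 2 \geq 2k - 1$, so $v_{xy} \notin L(G')$. A short case split on whether $y \in H$ or $y$ is middle (and on $d(x) = 2$ versus $d(x) \geq 3$) yields $h(G') - \ell(G') \geq h - \ell + 1$. Finally, any $k$ disjoint cycles in $G'$ uncontract to $k$ disjoint cycles in $G$: a cycle in $G'$ through $v_{xy}$ becomes, in $G$, a cycle that uses $x$ alone, $y$ alone, or both $x$ and $y$ (using the edge $xy$ when the cycle's two $v_{xy}$-neighbors split between $N(x)$ and $N(y)$), with length at least $3$ since $xy$ is in no triangle. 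Thus $G'$ is a counterexample with parameter $(k, i - 1)$, contradicting minimality.

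Part (v) is the main obstacle. Let $t = |\mathcal{T}|$ and suppose for contradiction that at most one vertex $v_0 \in V \setminus X$ satisfies $\|v_0, X\| \geq 2t + 1$. Set $G' = G - X$; since $\mathcal{T}$ provides $t$ disjoint cycles in $G$, $c(G') < k - t$. The size bound is immediate: $|G'| = |G| - 3t \geq 16k + 3i - 3t \geq 16(k - t) + 3i$. For the degree-difference bound, let $a = |H \cap X|$ and $b = |L \cap X|$; goodness of each triangle forces $b \geq t$ and $a \leq 2t$. The only way a vertex of $V \setminus X$ can change status unfavorably (under the simultaneous vertex removal and threshold shift from $2k$ to $2(k - t)$) is to have $\|v, X\| \geq 2t + 1$, so by hypothesis only $v_0$ does. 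A careful tally yields
\[
h_{k - t}(G') - \ell_{k - t}(G') \geq (h - \ell) - t - 2,
\]
which sharpens to $(h - \ell) - t - 1$ when $t = 1$, where $\|v_0, X\| \leq |X| = 3$ precludes a full $H$-to-$L$ drop of $v_0$. Taking $i' = i - 2(t - 1)$ in general (and $i' = i - 1$ when $t = 1$) produces a valid $i' \leq k - t$, so $G'$ satisfies Lemma~\ref{lem:minimal}'s hypotheses with parameter $(k - t, i')$, which is lex-smaller than $\sigma$ since $k - t < k$; by minimality $G'$ contains $k - t$ disjoint cycles, contradicting $c(G') < k - t$. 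The delicacy lies in juggling the vertex removal against the threshold shift, with $\|v_0, X\| \leq 3t$ essential for bounding status drops; the edge case $k - t < 2$ (where the lemma's hypotheses break down) is handled by noting that $\mathcal{T}$ alone already supplies $\geq k - 1$ cycles, so that the existence of a single cycle in $G'$ (a much weaker requirement) suffices.
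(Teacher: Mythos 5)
Your overall strategy --- exploiting (d) by building a lex-smaller graph satisfying (a)--(c) --- is exactly the paper's, and your treatments of (i), (iii) and (iv) match the paper's proofs (delete the isolated vertex with $i'=i-1$; delete the edge, noting both endpoint classes are stable under losing one edge; contract $xy$ using $N(x)\cap N(y)=\emptyset$ and $d_{G'}(v_{xy})\ge d(y)$). But two parts have genuine gaps. For (ii), you correctly reduce to the exceptional outcomes of Lemma~\ref{lemma:2-core} (a $2$-core of order at most $5$, or one isomorphic to $SK_5$), but you never actually rule them out: ``combining these alternatives with $|G|\ge 35$, (i), (iii) and Theorem~\ref{thm:2k+t}'' is a list of ingredients, not an argument. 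The paper first uses Theorem~\ref{thm:2k+t} to force $i=2$ and hence $h-\ell=4$ exactly, and then runs a degree-sum count $2\|G\|\ge \ell+3(|G|-\ell)+h$ together with $\ell\le |G|/2-2$ to get $\|G\|-|G|\ge |G|/4+3>5$; since deleting vertices of degree at most $1$ cannot decrease $\|\cdot\|-|\cdot|$, the $2$-core $G'$ satisfies $\|G'\|>|G'|+5$, which forces $|G'|>5$ and $G'\not\cong SK_5$ (as $\|SK_5\|-|SK_5|=5$). Some such count is required; it does not follow from the facts you cite.

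The more serious gap is in (v). Your bookkeeping for the generic case is sound and essentially reproduces the paper's $\beta_1,\beta_2$ computation: only the one exceptional vertex $v_0$ can change status, giving $h_{k'}(G')-\ell_{k'}(G')\ge h-\ell-t-2$ (sharpened when $t=1$), and the choices of $i'$ make (a)--(b) hold for $(k',i')$ with $k'<k$. But the case $k'=k-t=1$ cannot be delegated to the minimality assumption (which requires $k'\ge 2$), and you dispose of it by asserting that ``the existence of a single cycle in $G'$ \dots suffices.'' That is precisely the statement that needs proof: a priori $G-X$ could be a forest, and nothing you have said excludes this. This is where the paper does the real work of the lemma: it uses (iii) to get $d_G(x)\le 2k$ for every $x\in X$ (each triangle vertex is adjacent to a low vertex), hence $\|X,V(G')\|<6k^2$; a degree count over the at least $2k-i$ surplus high vertices of $G'$ then gives $2\|G'\|\ge k(|G'|-4k-i)$, while acyclicity of $G'$ forces $2\|G'\|<2|G'|$, and with $|G'|\ge 13k+3i$ and $k\ge 3$ these are incompatible. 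Without an argument of this kind, your proof of (v) --- and hence of the lemma --- is incomplete.
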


\begin{proof}
Assume (a--d) hold. Using Theorem~\ref{thm:3k}, (a--c) imply $i \geq 1$; so the minimum in (d) is well defined.
If (\ref{minimal 1}) fails, then let $v$ be an isolated vertex in $G$.  Now $G': = G-v$ and $i': = i -1$ satisfy conditions (a\textendash c), contradicting (d). 
 Hence,~(\ref{minimal 1}) holds.

For (\ref{minimal 2}), suppose $k=2$.  Then $t(G)\le c(G) \leq 1$.  If $i =1$ then  $h - \ell \geq 3k - i \geq 2k + t(G)$, so $c(G) \geq 2$ by Theorem~\ref{thm:2k+t}. 
 Thus $i = 2$ and $h - \ell = 4$.  Using~\eqref{eq:less than half} and~(\ref{minimal 1}), 
{\allowdisplaybreaks
\begin{align}
\nonumber \| G \| &\geq \frac{1}{2} ( \ell + 3( |G| - \ell ) + h ) = \frac{1}{2} ( 3|G| + h - 2\ell  ) \\
\nonumber &= \frac{1}{2} ( 3|G| - \ell + 4  ) \geq \frac{1}{2} \left( 3|G| - \left( \frac{|G|}{2} - 2 \right) + 4  \right) \\
\label{j20} &= |G| + \frac{ |G| }{4} + 3
\geq |G| + \frac{\const}{2} + \frac{3i}{4} + 3 = |G| + \frac{\const}{2} + \frac{9}{2}. 
\end{align}
}If $G'$ is the $2$-core of $G$, then $\| G' \| - |G'| \ge \| G \| - |G|$.  Since $\const > 1$,~\eqref{j20} yields $\|G' \| > |G'| + 5$; so $|G'| > 5$ and $G' \not\cong SK_{5}$.  
By Lemma~\ref{lemma:2-core}, $c(G)\ge 2$, contradicting (c).

For (\ref{minimal 3}), suppose $e \in E(G[L\cup V^{\geq 2k+1}(G)])$, and set $G':=G - e$.  Since $G'$ is a spanning subgraph of $G$, it satisfies (a) and (c).
 Moreover, by the definition of $G'$, $h_k(G')=h$ and $ \ell_k(G')=\ell$, so (b) holds for $G'$, which means
 (d) fails for $G$. 

If (\ref{minimal 4}) fails, then let $G' = G \diagup xy$ and $i' = i - 1$.  Since $d_{G'} (v_{xy}) \geq d(y)$ and the degrees of all other vertices in $G'$ are unchanged,
 $G'$ and $i'$ satisfy (a\textendash c), contradicting (d).

Finally, suppose (\ref{minimal 5}) fails, and let $u\in V\smallsetminus X$ with
$\left\Vert u,X\right\Vert $ maximum. Then $\left\Vert v,X\right\Vert \leq2|\mathcal{T}|$
for all $v\in V\smallsetminus(X+u)$. Set $G'=G-X$,
 $k'=k-|\mathcal{T}|$, and  $i' = i - |\mathcal{T}| \leq k'$. 
  Then $H\cap V(G')-u\subseteq H_{k'}(G')$ and $L_{k'}(G')-u\subseteq L\cap V(G')$.
Since $\const \geq 3$, we have  $|G'| \geq \const k' + 3i$; so $G'$ satisfies (a). Let $\beta_{1}=1$ if $u\in H\smallsetminus H_{k'}(G')$;
else $\beta_{1}=0$. Let $\beta_{2}=1$ if $u\in L_{k'}(G)\smallsetminus L$;
else $\beta_{2}=0$. Then $\beta_{1}+\beta_{2}\leq|\mathcal{T}|$ and so
{\allowdisplaybreaks
\begin{align*}
h_{k'}(G') &\geq h - 2|\mathcal{T}| - \beta_{1} 
\geq \ell + 3k - i - 2| \mathcal{T} | - \beta_{1}\geq ( \ell_{k'}(G')+ |\mathcal{T} |-\beta_2)+3k-i
-2| \mathcal{T} | - \beta_{1}\\
&=\ell_{k'}(G') - |\mathcal{T} |  + 3(k'+|\mathcal{T} |) - (i'+  |\mathcal{T}|) - \beta_{1}-\beta_{2} 
\geq \ell_{k'}(G') + 3k' - i'.
\end{align*}}
\noindent
This means $G'$ satisfies (b).
As $c(G')+|\mathcal{T}|\leq c(G)<k$, $c(G')<k'$. Thus $G'$ satisfies (c).  If $k' \geq 2$, then this contradicts the choice of $k$ in (d), so (\ref{minimal 5}) holds.

  Otherwise,  $| \mathcal{T} | = k - 1$ and so $|X | = 3k -3$. Since each triangle in $\T$ has a low vertex,
$|L\cap X|\geq |\T|$,   and  by (iii), $d_G(x)\leq 2k$ for each $x\in X$. 
 Thus
\begin{equation}\label{10}%\notag
\| X,V(G')\|<2k|X|<6k^2.
\end{equation}
By (b),   $|H\cap V(G')|- |L\cap V(G')|\ge 3k -i-|H\cap X|+ |L\cap X|\ge 2k-i$. 
So,
$$\sum_{v\in V(G')\cap (H\cup L)}d_G(v)\geq 2k|H\cap V(G')|\geq 2k\frac{|V(G')\cap(H\cup L)|+(2k-i)}{2}.
$$
By this and~\eqref{10}, we get
\begin{equation}\label{11}
2\|G'\|=\sum_{v\in V(G')}d_G(v)-\|X,V(G')\|\ge k(|G'|+2k-i)-\|X,V(G')\|\ge k(|G'|-4k-i). 
\end{equation}
By (c), $c(G)\le k-1$, so $G'$ has no cycle. Thus by~\eqref{11},
\[2|G'|>2\|G'\|\ge k(|G'|-4k-i).\] 
By (a), $|G'|\ge|G|-3k\ge(\alpha-3)k+3i=13k+3i$. Solving yields 
\begin{align}\notag
k(4k+i)&>(k-2)|G'|\ge (k-2)(13k+3i)\\
26k&>9k^2+i(2k-6).\notag 
\end{align}
As $i\ge 0$, and $k\ge3$ by \eqref{minimal 2}, this is a contradiction.
\end{proof}

%%======================================================================
%%Section:  Proof
%%======================================================================
%% 
%%
 \section{Proof of Theorem~\ref{thm:2k induct}}\label{sec:proof}
%%
%%
%%======================================================================
%%======================================================================

Fix $k$, $i$, and $G=(V,E)$ satisfying the hypotheses of Lemma~\ref{lem:minimal}. First choose
a set $\mathcal{S}$ of disjoint good triangles with $s:=|\mathcal{S}|$ maximum,
and put $S=\bigcup\mathcal{S}$.
 Next choose a set $\mathcal{S}'$ of disjoint triangles,
each contained in $V^{\leq2k}(G)\smallsetminus S$, with $s':=|\mathcal{S}'|$
maximum, and put $S'=\bigcup\mathcal{S}'$. 
Say $\mathcal{S}=\{T_{1},\dots,T_{s}\}$ and $\mathcal{S}'=\{T_{s+1},\dots,T_{s+s'}\}$. 

Let $\mathcal{H}$ be the directed graph defined on vertex set $\mathcal{S}$ by
$CD\in E(\mathcal{H})$ if and only if there is $v\in C$ with $\left\Vert v,D\right\Vert =3$. 
 Here we allow graphs with no vertices. 
  A vertex $C'$ is \emph{reachable} from a vertex $C$ if 
$\mathcal{H}$ contains a directed $CC'$-path.

\begin{fact}
\label{2}If $x\in L\smallsetminus S$ and $d(x)\geq2$ then $N(x)\subseteq S$. 
\end{fact}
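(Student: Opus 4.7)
The plan is to argue by contradiction. Suppose $y\in N(x)\smallsetminus S$; the aim is to contradict the maximality of $\mathcal{S}$. First, because $x\in L$ and $d(x)\geq 2$, Lemma~\ref{lem:minimal}(iv) applies, giving a triangle $xyz$ in $G$; this triangle is good because $x\in L$. The easy case is $z\notin S$, in which $\{x,y,z\}\cap S=\emptyset$ and $\mathcal{S}\cup\{xyz\}$ is a set of $s+1$ disjoint good triangles, directly contradicting maximality.

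So assume $z\in S$ and let $T_j\in\mathcal{S}$ contain $z$, with $T_j=\{z,u,v\}$. I would next use Lemma~\ref{lem:minimal}(iii) to pin down the structure of $T_j$: the edge $xz$ with $x\in L$ forces $z\notin L\cup V^{\geq 2k+1}$, and since $T_j$ is good, either $u$ or $v$ must be low --- say $u\in L$, with $v\notin L$ again by~(iii). The natural next move is the swap $\mathcal{S}^{*}:=(\mathcal{S}\smallsetminus\{T_j\})\cup\{xyz\}$, another maximum collection of $s$ disjoint good triangles whose footprint is $S^{*}=(S\smallsetminus\{u,v\})\cup\{x,y\}$; inside $\mathcal{S}^{*}$ the pair $(u,v)$ reproduces the same bad configuration that $(x,y)$ had with respect to $\mathcal{S}$.

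The main obstacle is converting this cascade into a genuine contradiction, since a single swap is reversible and does not reduce any obvious parameter. My intended remedy is to iterate the swap, tracking the chain of visited triangles via the auxiliary digraph $\mathcal{H}$ defined immediately after Fact~\ref{2}. At each step the current low vertex outside the footprint has a non-footprint neighbor, and the edge between them lies in a good triangle meeting the footprint; the swap advances along an arc of $\mathcal{H}$. The hard step will be showing that this chain cannot cycle --- which I expect to deduce from~(iii) together with the fact that each swap freshly exposes a new low vertex --- and therefore must terminate in a good triangle entirely outside the current footprint, contradicting maximality. A backup approach would combine the cascade with Lemma~\ref{lem:minimal}(v) to produce too many high-link vertices outside the footprint and derive a counting contradiction.
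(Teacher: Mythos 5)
Your setup is right --- the contradiction hypothesis, the triangle $xyzx$ supplied by Lemma~\ref{lem:minimal}(iv), the observation that it is good, and the easy case $z\notin S$ --- but the core of your argument, the iterated triangle swap, has exactly the gap you flag, and it is fatal as stated. After replacing $T_j=zuv$ by $xyz$, Lemma~\ref{lem:minimal}(iv) applied to the exposed pair $(u,v)$ only guarantees that $uv$ lies in \emph{some} triangle $uvw'$; nothing prevents $w'=z$, i.e.\ the only triangle through $uv$ may be $T_j$ itself. In that case your next swap removes $xyz$ and reinserts $zuv$, returning you to $\mathcal{S}$: the cascade cycles with period two, and the hoped-for fact that ``each swap freshly exposes a new low vertex'' fails because the exposed low vertex is immediately re-covered. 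No parameter decreases, so termination is not established and the contradiction never materializes.

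The paper's proof avoids iteration entirely, and your ``backup approach'' is much closer to it than your main plan. Working with the \emph{original} $\mathcal{S}$, let $C\in\mathcal{S}$ be the triangle containing $z$ and let $\mathcal{S}_0$ be the set of triangles from which $C$ is reachable in the digraph $\mathcal{H}$. Lemma~\ref{lem:minimal}(v) applied to $\mathcal{S}_0$ produces a vertex $w\notin\bigcup\mathcal{S}_0$ with $w\ne y$ and $\left\Vert w,\bigcup\mathcal{S}_0\right\Vert\ge 2|\mathcal{S}_0|+1$, hence $\left\Vert w,D\right\Vert=3$ for some $D\in\mathcal{S}_0$; part (iii) gives $w\ne x$, and $w\notin S$ because otherwise the triangle of $\mathcal{S}$ containing $w$ could reach $C$ through $D$ and would itself lie in $\mathcal{S}_0$. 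A single chain of \emph{vertex} rotations along a $D$--$C$ path in $\mathcal{H}$ (insert $w$ into $D$, shift one fully adjacent vertex from each triangle into the next, eject $z$ from $C$) then yields $s$ disjoint good triangles avoiding $x,y,z$, and adding $xyzx$ contradicts the maximality of $\mathcal{S}$. Goodness of the rotated triangles follows from (iii), since each shifted vertex is adjacent to a low vertex and hence is not itself low. To salvage your swap idea you would need precisely this kind of well-foundedness statement about reachability, at which point you have reproduced the paper's argument.
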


\begin{proof}
Suppose $y\in N(x)\smallsetminus S$. As $x$ is low, $x \notin S'$. By
Lemma~\ref{lem:minimal}(\ref{minimal 4}), $xy$ is in a triangle $xyzx$. As\emph{ $\mathcal{S}$ }is maximal, $z\in S$,
so $z\in C$ for some $C\in\mathcal{S}$. Let 
\[ \mathcal{S}_{0}=\{C'\in\mathcal{S}:C \text{~is reachable from \ensuremath{C'} in}\,\mathcal{H}\}.\]
By Lemma~\ref{lem:minimal}(\ref{minimal 5}), there is $w\in(V\smallsetminus\bigcup\mathcal{S}_{0})-y$ with $\left\Vert w,\bigcup\mathcal{S}_{0}\right\Vert \geq2|\mathcal{S}_{0}|+1$.
Then $\left\Vert w,D\right\Vert =3$ for some $D\in\mathcal{S}_{0}$. By Lemma~\ref{lem:minimal}(\ref{minimal 3}), $w\ne x$.
Further, $w\notin S$ as otherwise the triangle in $\mathcal{S}$ containing $w$ is in $\mathcal{S}_{0}$, contradicting that $w \notin \bigcup \mathcal{S}_{0}$.

Let $D = C_{1}, \ldots, C_{j} = C$ be a $D,C$-path in $\mathcal{H}$, and for $i \in [j - 1]$ let $x_{i} \in C_{i}$ with $\| x_{i}, C_{i+1} \| = 3$.  If $C_{1}' = C_{1} - x_{1} + w$, $C_{j}' = C_{j} - z + x_{j-1}$ and $C_{i}' = C_{i} - x_{i} + x_{i-1}$ for $i \in \{ 2, \ldots, j-1\}$, then $\left( \mathcal{S} \smallsetminus \bigcup_{i = 1}^{j} C_{i} \right) \cup \bigcup_{i =1}^{j} C_{i}' \cup \{xyzx\}$ is a set of $s+1$ disjoint good triangles.  
 This contradicts the maximality of $\mathcal{S}$.
\end{proof}

\begin{fact}\label{fact:2 leaves}
Each $v \in V$ is adjacent to at most $2$ leaves.  Moreover, if $v$ is adjacent to $2$ leaves, then $v \in V^{2k}$.
\end{fact}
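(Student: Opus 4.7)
The plan is to invoke the minimality of $\sigma$ from Lemma~\ref{lem:minimal} by deleting excess leaves adjacent to~$v$. For each alleged violation of the Fact, I would build a subgraph $G'$ of $G$ that still satisfies hypotheses (a)--(c) of Lemma~\ref{lem:minimal} with $i' = i - 1 < i$. Since $c(G') \leq c(G) < k$, this contradicts the minimality of $\sigma = (k, i, |G| + \|G\|)$ (or directly contradicts Theorem~\ref{thm:3k} when $i' \leq 0$).

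Two quick observations set up the case analysis. First, every leaf lies in $L$, because a leaf has degree $1 \leq 2k - 2$. Second, by Lemma~\ref{lem:minimal}(\ref{minimal 3}), $L \cup V^{\geq 2k+1}$ is independent; so if $v$ is adjacent to a leaf, then $d(v) \in \{2k-1,\,2k\}$.

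For the first assertion, I would suppose that $v$ has three leaf neighbors $x_1, x_2, x_3$ and set $G' = G - \{x_1, x_2, x_3\}$. Deleting the three leaves drops $d(v)$ to either $2k - 4$ or $2k - 3$, so in either case $v$ enters $L_k(G')$. A direct accounting gives $h_k(G') - \ell_k(G') \geq (h - \ell) + 1 \geq 3k - (i - 1)$ and $|G'| = |G| - 3 \geq \alpha k + 3(i - 1)$, so $G'$ satisfies (a)--(c) with $i' = i - 1$, contradicting the minimality of $\sigma$. For the \emph{moreover} clause, I would suppose $v$ has two leaf neighbors $x_1, x_2$ and $v \in V^{2k-1}$, and set $G' = G - \{x_1, x_2\}$. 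Then $d_{G'}(v) = 2k - 3$, so $v$ joins $L_k(G')$ while $H_k$ is unaffected; a short calculation yields $h_k(G') - \ell_k(G') = (h - \ell) + 1 \geq 3k - (i - 1)$ and $|G'| \geq \alpha k + 3(i - 1)$, yielding the same contradiction.

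The main subtlety is the bookkeeping for $h_k$ and $\ell_k$: the needed unit gain in $h - \ell$ arises exactly when the number of deleted leaves strictly exceeds the net number of reclassifications of $v$ (losing $v$ from $H$ plus gaining $v$ in $L$). This count fails in precisely the excluded configuration, two leaves at a vertex of $V^{2k}$, which is why the \emph{moreover} clause is stated as it is. The size condition with $\alpha = 16$ provides more than enough slack to absorb the removal of two or three vertices from $|G|$.
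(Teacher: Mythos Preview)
Your proof is correct and follows essentially the same approach as the paper's: both delete the set of leaves adjacent to $v$, use Lemma~\ref{lem:minimal}(\ref{minimal 3}) to pin down $d(v)\in\{2k-1,2k\}$, and then verify that the resulting graph satisfies (a)--(c) with a smaller parameter $i'$, contradicting minimality. The only difference is presentational: the paper handles both situations with a single formula $i' = i - (|X| - 1 - |\{v\}\cap V^{2k}|)$, whereas you split into explicit cases, but the bookkeeping is the same.
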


\begin{proof}
Let $v$ be adjacent to a leaf.  By Lemma~\ref{lem:minimal}(\ref{minimal 3}), $v \in V^{2k-1} \cup V^{2k}$.  Let $X$ be the set of leaves adjacent to $v$, and put $G'=G-X$. 
 Let $i' = i - (|X| - 1 - |\{ v \} \cap V^{2k}|)$. Observe
{\allowdisplaybreaks
\begin{align*}
h_{k} ( G' ) - \ell_{k} (G' ) &\geq (h - |\{ v \} \cap V^{2k}| ) -  (\ell + 1 - |X| )  \\
&= h - \ell - |\{ v \} \cap V^{2k}| + |X| - 1 \\
&\geq 3k  - i - |\{ v \} \cap V^{2k}| + |X| - 1 \\
&\geq 3k - i',
\end{align*}}
so (b) holds for $G', k$ and $i'$.  Now, $|G'| \geq \const k + 3i - |X| = \const k + 3i' + 2|X| - 3( 1 + |\{v \} \cap V^{2k}|)$.
If $|X| \geq 3$, then $2|X| - 3( 1 + |\{ v \} \cap V^{2k}|) \geq 0$, so $|G'| \geq \const k + 3i'$ and (a) holds.  As $i'$ is at most $i$ and $G' \subset G$, (d) does not hold for $G,k$, and $i$, a contradiction.
 Similarly, if $v \in V^{2k-1}$ and $|X| = 2$, then $|G'| \geq \const k + 3i'$, so (a) still holds and $G', k$ and $i'$.
 Thus this also contradicts (d) for $G$.
\end{proof}

Let $G_{1} = G - V^{1}$. 
 Let $H^1 = V^{\geq2k}(G_{1})$,
$R^1=V^{2k-1}(G_{1})$, $L^1 = L_{k}(G_{1}) \cap L$, and $M = L_{k}(G_{1})\smallsetminus L^1$.
Then $G_{1} = G[H^1\cup R^1 \cup M \cup L^1]$ and $V^{\geq2k-1}(G) = H^1 \cup R^1 \cup M$.
Since deleting a leaf does not decrease the difference  $h- \ell$, 
\begin{equation}
\label{eq:G1}
h_{k}(G_{1}) - \ell_{k} (G_{1})  \geq 3k - i.
\end{equation}  

\begin{fact}\label{fact:new low in triangle}
If $x \in M$, then $x$ is in a triangle $xyzx$ in $G$ with $d(x), d(y), d(z) \leq 2k$.
\end{fact}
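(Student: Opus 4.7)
The plan is to argue by contradiction, assuming $x \in M$ is not contained in any triangle $xyzx$ with $d(x), d(y), d(z) \leq 2k$. Since $x \in M$ has at least one leaf neighbor (a vertex in $L$), Lemma~\ref{lem:minimal}(\ref{minimal 3}) forces $x \notin V^{\geq 2k+1}$, so $d_G(x) \leq 2k$; combined with $x \notin L$ and $d_{G_1}(x) \leq 2k-2$ and Fact~\ref{fact:2 leaves}, the only possibilities are $(d(x), \ell_x) = (2k-1, 1)$ or $(2k, 2)$, where $\ell_x$ is the number of leaves adjacent to $x$. In the latter case, let $u, u'$ be the two leaves. Then $G^* := G - \{u, u'\}$ satisfies $d_{G^*}(x) = 2k-2$, $\ell_k(G^*) = \ell - 1$, $h_k(G^*) = h$, and $|G^*| = |G| - 2$. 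I would verify that $G^*$ together with $i' := i - 1$ satisfies (a)--(c) of Lemma~\ref{lem:minimal}, yielding $\sigma' < \sigma$ and contradicting (d).

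Hence $d(x) = 2k - 1$ and $x$ has a unique leaf $u$. First I would try to exploit a low non-leaf neighbor: if there exists $v \in N(x) \setminus \{u\}$ with $v \in L$, then $d(v) \geq 2$ (since $v$ is not a leaf), so Lemma~\ref{lem:minimal}(\ref{minimal 4}) yields a triangle $xvz$. Since $v \in L$ and $L \cup V^{\geq 2k+1}$ is independent by Lemma~\ref{lem:minimal}(\ref{minimal 3}), the third vertex satisfies $z \notin V^{\geq 2k+1}$, i.e.\ $d(z) \leq 2k$. Together with $d(v) \leq 2k - 2$ and $d(x) = 2k - 1$, the triangle $xvz$ has all degrees at most $2k$, contradicting our standing assumption.

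Therefore every non-leaf neighbor of $x$ has degree at least $2k - 1$. Partition $N(x) \setminus \{u\} = A \cup C$ with $A := N(x) \cap V^{\geq 2k+1}$ and $C := N(x) \cap (V^{2k-1} \cup V^{2k})$, so $|A| + |C| = 2k - 2$. Under our assumption, $G[C]$ is edgeless (any edge in $C$ would produce a triangle through $x$ with all degrees $\leq 2k$), so $C$ is independent; and $A$ is independent by Lemma~\ref{lem:minimal}(\ref{minimal 3}). If $|C| = 0$, then $G^{**} := G - \{x, u\}$ leaves every $w \in A$ with $d_{G^{**}}(w) \geq 2k$ (so $w$ stays in $H$), giving $h_k(G^{**}) = h$, $\ell_k(G^{**}) = \ell - 1$, and $|G^{**}| = |G| - 2$; the triple $(G^{**}, k, i-1)$ then satisfies (a)--(c), again contradicting (d).

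The remaining case $|C| \geq 1$ is the main obstacle. Pick $z \in C$; since $C$ is independent and $u$ is a leaf, every common neighbor of $x$ and $z$ lies in $A$. Contracting $xz$ in $G$ and carefully tracking which vertices enter or leave $H_k, L_k$ handles the easy subcase $d(z) = 2k - 1$: the new vertex $v_{xz}$ lands in $H$, raising $h - \ell$ by one, and the triple $(G/xz, k, i-1)$ satisfies (a)--(c). The genuine difficulty is when every $z \in C$ has $d(z) = 2k$, because then contraction only swaps a high vertex for a high vertex and fails to shift $h-\ell$. To close this case, the plan is to first derive the tight identities $|G| = \alpha k + 3i$ and $h - \ell = 3k - i$ (forced because otherwise $G - u$ itself would already contradict minimality), and then to combine Fact~\ref{2}, the maximality of $\mathcal{S}$, and the constraint that $V^{\geq 2k+1}$-vertices have neighborhoods inside $V^{2k-1} \cup V^{2k}$, in order either to swap a triangle into $\mathcal{S}$ (contradicting its maximality) or to locate a two-vertex deletion that simultaneously gains one in the $h-\ell$ accounting and two in the vertex count, thereby exhibiting a lex-smaller counterexample.
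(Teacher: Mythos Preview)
Your proposal has several genuine gaps.

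\textbf{The $(2k,2)$ case is wrong.} You delete the two leaves $u,u'$ and claim $h_k(G^*)=h$. But $x$ had $d_G(x)=2k$, so $x\in H_k(G)$, and after the deletion $d_{G^*}(x)=2k-2$, so $x\in L_k(G^*)$. Hence $h_k(G^*)=h-1$ and $\ell_k(G^*)=\ell-1$, giving $h_k(G^*)-\ell_k(G^*)=h-\ell$, not $h-\ell+1$; condition (b) with $i'=i-1$ fails. Notice also that your argument here never uses the contradiction hypothesis, which should be a warning sign: if it worked, it would forbid the $(2k,2)$ configuration outright, contradicting Fact~\ref{fact:2 leaves}.

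\textbf{The contraction in the $d(z)=2k-1$ subcase is not justified.} With $|A|+|C|=2k-2$ and $|C|\ge1$, common neighbors of $x$ and $z$ lie in $A$, but you only get $d_{G/xz}(v_{xz})=(2k-2)+(2k-2)-|N(x)\cap N(z)|\ge 4k-4-(2k-3)=2k-1$. So $v_{xz}$ need not be high, and $h-\ell$ need not increase.

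\textbf{The $d(z)=2k$ subcase is not proved.} You identify this as the main obstacle and sketch only a vague plan (tightness, swap a triangle, or find a two-vertex deletion). None of these are carried out, and it is not clear any of them succeeds.

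The paper's proof avoids all of this with one move you are missing. After producing a neighbor $y$ with $2\le d(y)\le 2k$ (your $|C|=0$ argument, done uniformly for both leaf counts by deleting $x$ together with its leaves) and a triangle $xyzx$, if $d(z)\ge 2k+1$ it \emph{deletes the triangle $\{x,y,z\}$ (and the leaves at $x$) and reduces $k$ to $k-1$}. Any vertex that drops out of $H$ or enters $L$ relative to $k-1$ must be adjacent to all of $x,y,z$; since $z\in V^{\ge 2k+1}$ and $L\cup V^{\ge 2k+1}$ is independent, such a vertex $u$ has $d(u)\le 2k$, and $xyux$ is the desired triangle. Otherwise the smaller graph with parameters $(k-1,i-2)$ contradicts minimality. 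This single reduction replaces your entire $|C|\ge1$ analysis.
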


\begin{proof}
Suppose $x \in M$. By Fact~\ref{fact:2 leaves}, either (i) $x \in V^{2k-1}$ and is adjacent to one leaf or (ii) $x \in V^{2k}$ and is adjacent to two leaves. Thus $d(x)\le 2k$. We first claim:
\begin{equation}
\label{CF3.3}
\mbox{\em $x$ has a neighbor $y$ such that $2\leq d(y) \leq 2k$.}
\end{equation}
Suppose not.  Let $X$ be the set consisting of $x$ and the leaves adjacent to $x$.  For each vertex $v \not\in X$, $d_{G-X}(v) \geq d(v) - 1$, with equality  if $v \in N(x)$.  Moreover, if $v \in N(x)$, then $d_{G-X} (v) \geq 2k$.  Therefore, $h_{k}( G - X ) = h - | \{ x \} \cap V^{2k}|$ and $\ell_{k}( G - X ) = \ell - ( |X| - 1 )$.  So 
\[ h_{k}(G - X ) - \ell_{k} (G - X) = h - \ell + 1 \geq 3k - ( i - 1 ) \]
and 
$|G - X| \geq |G| - 3 \geq \const k + 3( i - 1 )$,
 contradicting the minimality of $i$.  So \eqref{CF3.3} holds.

Now, suppose $xy$ is not in a triangle.  Let $G'$ be formed from $G$ by removing the leaves adjacent to $x$ and contracting $xy$.  By Fact~\ref{fact:2 leaves}, $|G'| \geq |G| - 3$.  Since $d(x) \geq 2k - 1$ and $x$ does not share neighbors with $y$, $d_{G'}(v_{xy}) \geq d(y)$.  Similarly, $d_{G'}(v) = d(v)$ for all $v \in V(G') - v_{xy}$.  Now, $h_{k}(G') - \ell_{k}(G') = h - \ell + 1 \geq 3k - (i-1)$,  contradicting the choice of $i$.  

Let $xyzx$ be a triangle containing $xy$.  If $d(z) \leq 2k$, we are done.
Otherwise, let $G''$ be the graph obtained from $G$ by removing the leaves adjacent to $x$ and deleting the vertices $x, y$, and $z$.  Observe $|G''| \geq |G| - 5 \geq \const (k-1) + 3(i - 1)$.  If there exists a vertex $u \in H \setminus H_{k-1}(G'')$, then $N(u) \supseteq \{ x, y, z \}$,  and $d(u) \leq 2k$, since $d(z) \geq 2k+ 1$.  In this case $xyux$ is the desired triangle.  Similarly, if $v \in L_{k-1}(G'') \setminus L$, then $xyvx$ is the desired triangle.  Thus  $h - h_{k-1}(G'') \leq 2 + |\{ x \} \cap V^{2k}|$ and 
$\ell - \ell_{k-1}(G'') \geq  1 + |\{ x \} \cap V^{2k}|$.  Now,
\[ h_{k-1}( G'') - \ell_{k-1}(G'') \geq h - \ell - 1 \geq 3k - i - 1 = 3(k-1) - (i - 2).\]
 By the minimality of $G$, $c(G'') \geq k-1$. Hence $c(G) \geq k$, a contradiction.  We conclude that $xyzx$ is a triangle with $d(x), d(y), d(z) \leq 2k$.
\end{proof}

\begin{fact}\label{fact:s plus s prime}
$s + s' \geq 1$.
\end{fact}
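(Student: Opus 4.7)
The plan is a proof by contradiction: suppose $s=s'=0$, so $S=\emptyset$ and $G$ contains no triangle whose vertex set lies in $V^{\leq 2k}(G)$. The goal is to verify the hypotheses of Corollary~\ref{cor:K-K-Y} for the graph $G_1=G-V^1$; this will produce $c(G_1)\ge k$ disjoint cycles in $G_1\subseteq G$, contradicting Lemma~\ref{lem:minimal}(iii).

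First I would pin down the structure of the low vertices. Fact~\ref{2} applied with $S=\emptyset$ forces every $x\in L$ with $d(x)\geq 2$ to satisfy $N(x)=\emptyset$, which is impossible by Lemma~\ref{lem:minimal}(\ref{minimal 1}). Combined with the trivial inclusion $V^1\subseteq L$ (valid since $k\geq 2$), this yields $L=V^1$; in particular $L^1=L\cap V(G_1)=\emptyset$. Next I would argue that $M=\emptyset$: any $v\in M$ satisfies $d_G(v)\in\{2k-1,2k\}$, so by Fact~\ref{fact:new low in triangle} it lies in a triangle $T$ with $V(T)\subseteq V^{\leq 2k}(G)$; since $S=\emptyset$, $T$ is disjoint from $S$, so $\{T\}$ witnesses $s'\geq 1$, contradicting $s'=0$. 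Consequently $L_k(G_1)=L^1\cup M=\emptyset$, which means $\delta(G_1)\geq 2k-1$.

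To finish, I would verify the remaining hypotheses of Corollary~\ref{cor:K-K-Y} for $G_1$. Inequality~\eqref{eq:G1} gives $h_k(G_1)\geq 3k-i\geq 2k$ since $k\geq i$; inequality~\eqref{eq:less than half} together with $|G|\geq 16k$ yields $|G_1|=|G|-\ell\geq |G|/2+k\geq 9k\geq 3k$. Corollary~\ref{cor:K-K-Y} then produces $k$ disjoint cycles in $G_1$, the desired contradiction. The only delicate point is the elimination of $M$: removing the leaves $V^1=L$ from $G$ can push a vertex of degree $2k-1$ or $2k$ in $G$ down into $L_k(G_1)$, and one has to confirm that every such potential new-low vertex is already ruled out. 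Fact~\ref{fact:new low in triangle} does essentially all of the work by forcing each such vertex into a triangle contained in $V^{\leq 2k}(G)$, which the assumption $s'=0$ (together with $S=\emptyset$) immediately prohibits.
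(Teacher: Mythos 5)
Your proof is correct and follows essentially the same route as the paper's: use Fact~\ref{2} with $S=\emptyset$ plus Lemma~\ref{lem:minimal}(\ref{minimal 1}) to show $L=V^1$, use Fact~\ref{fact:new low in triangle} to kill $M$, and then apply Corollary~\ref{cor:K-K-Y} to $G_1$ after checking $h_k(G_1)\geq 2k$ and $|G_1|\geq 3k$. The only slip is the final citation: the $k$ disjoint cycles contradict hypothesis (c) of Lemma~\ref{lem:minimal} (that $c(G)<k$), not conclusion (\ref{minimal 3}).
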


\begin{proof}
Suppose $s + s' = 0$.  In this case, Fact~\ref{fact:new low in triangle} implies $M = \emptyset$: indeed, if $v \in M$, there exists a triangle $vuwv$ with $d(v), d(u), d(w) \leq 2k$, contradicting the choice of $\mathcal{S}'$.  By Fact~\ref{2} and since $\mathcal{S} = \emptyset$, all vertices in $L$ have degree at most $1$.  By Lemma~\ref{lem:minimal}(\ref{minimal 1}), all vertices in $L$ are leaves in $G$ and $L^1 = \emptyset$.

Now, for every $x \in H - H_{k}(G_{1})$, there is a leaf $y \in L - L_{k}(G_{1})$ such that $xy \in E(G)$. Hence,
\[ h_{k}(G_{1}) \geq h_{k}(G_{1}) - \ell_{k}(G_{1}) \geq h - \ell \geq 2k. \] 
By~\eqref{eq:less than half} and since $\const \geq 4$, $|G_{1}| \geq |G| - \ell \geq |G|/2 + k \geq \const k / 2 + k\geq 3k$.  Finally, $L_{k}(G_{1}) = L^1 \cup M = \emptyset$, so Corollary~\ref{cor:K-K-Y} implies $G_{1}$ (and also $G$) contains $k$ disjoint cycles.
\end{proof}

Let $G_{2} = G\smallsetminus(L\smallsetminus S)$. 
 By~\eqref{eq:less than half},
\begin{equation}
|G_{2}| \geq \frac{\const +2}{2}k +\frac{3i}{2}. \label{eq:B}
\end{equation}

\begin{proof}[Proof of Theorem~\ref{thm:2k induct}]
 Put $s^{*}=\max\{1,s\}$. Let $\mathcal{S}^{*}=\{T_{1},\dots,T_{s^{*}}\}$; by
Fact~\ref{fact:s plus s prime}, $T_{s^{*}}$ exists. Put $S^{*}=\bigcup\mathcal{S}^{*}$.
Let $W=V(G_{2})\smallsetminus S^{*}$, $F=G[W]$ and $k'=k-s^{*}$. It suffices to
prove $c(F)\geq k'$. \\

\noindent \emph{Case 1:} $s^{*} = k-1$.  Since $k \geq 3$, $s^{*} \geq 2$.  Thus, $s = s^{*} = k-1$.  By Fact~\ref{fact:2 leaves}, all vertices in $M$ have degree $2k-2$.  Let $M' = M \cap W$ and $H' = H(G_{2}) \cap W$.  Fact~\ref{2} implies that if $v \in W$, then $d_{G_{1}}(v) = d_{G_{2}}(v)$. Thus
\[ H' = H^1 \cap W \text{ and } L(G_{1})\cap W = L(G_{2}) \cap W.\] 
Hence, by~\eqref{eq:G1},
{\allowdisplaybreaks
\begin{align}
2k \leq h(G_{1}) - \ell( G_{1} ) &\leq ( |H(G_{1}) \cap S| + |H'| ) - ( |L(G_{1}) \cap S| + |M \cap W| + |L^1 \smallsetminus S| ) \nonumber \\
&= (|H(G_{1}) \cap S| - |L(G_{1}) \cap S| ) + |H'| - |M'| - |L^1 \smallsetminus S| \label{eq:H minus M} \\
&\leq (k-1) + |H'| - |M'|. \nonumber
\end{align}
}
Here, the last inequality holds because $S$ contains $s = k-1$
low vertices and at most $2s = 2k - 2$ high vertices.  Equation~\eqref{eq:H minus M} implies $|H'| - |M'| \geq k + 1$.  Further, if $W$ does not contain a cycle, then
\begin{align}
\| W, S  \|_{G_{2}} &\geq \sum_{v \in W} d_{G_{2}} (v) - 2( |W| - 1 ) \nonumber \\
&\geq ( (2k-1)|W| + |H'| - |M'| ) - 2(|W| - 1) \nonumber \\
&\geq ( (2k-1)|W| + k + 1 ) - 2(|W| - 1) \label{eq:edges upper} \\
&\geq (2k - 3 )|W| + k + 3. \nonumber 
\end{align}
On the other hand, using Lemma~\ref{lem:minimal}\eqref{minimal 3}, 
\begin{equation}
\label{eq:edges lower}
\| W, S \|_{G_{2}} \leq \sum_{w \in S} (d_{G_{2}} (w) - 2) \leq ( k-1 )( 6k - 8).
\end{equation}
Therefore, combining~\eqref{eq:edges upper} and~\eqref{eq:edges lower}, $|W| \leq 3( k - 1) - \frac{4}{2k- 3}$. Since $|S| = 3(k - 1)$ and $|G_{2}| = |S| + |W|$, this contradicts \eqref{eq:B} when $\const \geq 10$. \\

\noindent \emph{Case 2: } $s^{*} \leq k - 2$.  Consider a vertex $v$ in $V^{\leq 2k'-2}(F)$.  Since every vertex in $F$ has degree at least $2k - 2$ in $G_{2}$, $v$ must be adjacent to at least $2s^{*}$ vertices in $S^{*}$.  Further, every vertex in $S^{*}$ is adjacent to at most $2k - 2$ vertices outside of $S^{*}$.  Therefore,  
\begin{equation}
\label{eq:V2k-2edge}
2s^{*} |V^{\leq 2k'-2}(F) | \leq \|  V^{\leq 2k'-2} (F), S^{*} \| \leq 3s^{*} (2k - 2),
\end{equation}
and so
\begin{equation}
\label{eq:V2k-2}
|V^{ \leq 2k'-2}(F)| \leq 3k - 3.
\end{equation}
Similarly, if $u \in V^{2k'-1}(F)$, then $u$ is adjacent to at least $2s^{*} - 1$ vertices in $S^{*}$.  Moreover, 
there are at most $3s^{*}(2k - 2) - \| V^{\leq 2k'-2}(F), S^{*} \|$ 
edges from $V^{2k-1}(F)$ to $S^{*}$.  So,
\[(2s^{*} - 1) | V^{2k'-1}(F) | \leq \| V^{2k'-1} (F), S^{*} \| \leq 3s^{*} (2k - 2) - \| V^{\leq 2k'-2} (F), S^{*} \| ,\]
and, combining with~\eqref{eq:V2k-2edge} gives,
\begin{align}
|V^{2k'-1}| &\leq \frac{2s^{*}(3k - 3)}{2s^{*} - 1} - \frac{2s^{*} |V^{\leq 2k'-2}(F)|}{2s^{*} - 1} \nonumber \\
&= 3k - 3 + \frac{3k-3}{2s^{*} - 1} - \frac{2s^{*} |V^{\leq 2k'-2}(F)|}{2s^{*} - 1}. \label{eq:V2k-1}
\end{align}
Using~\eqref{eq:V2k-2} and~\eqref{eq:V2k-1}, we see that
\begin{align*}
h_{k'}(F) - \ell_{k'}(F) &= |W| - 2|V^{\leq 2k'-2}(F)| - |V^{2k'-1}(F)| \\
&\geq |W| - 2|V^{\leq 2k'-2}(F)| - \left(3k - 3 + \frac{3k-3}{2s^{*} - 1} - \frac{2s^{*} |V^{\leq 2k'-2}(F)|}{2s^{*} - 1}\right) \\
&= |W| - \frac{ (2s^{*} - 2 )|V^{\leq 2k'-2}(F)|}{2s^{*} - 1} - 3k + 3 - \frac{3k-3}{2s^{*} - 1}  \\
&\geq |W| - \frac{ (2s^{*} - 2 )(3k - 3)}{2s^{*} - 1} - 3k + 3 - \frac{3k-3}{2s^{*} - 1}  \\
&= |W| + \left( -( 3k - 3) + \frac{ 3k - 3}{2s^{*} - 1} \right) - 3k + 3 - \frac{3k-3}{2s^{*} - 1}  \\
&= |W| - 6k + 6 \\
&\geq \left( \frac{\const +2}{2}k +\frac{3i}{2} - 3s^{*} \right)- 6k + 6 \\
&\geq \frac{\const +2}{2}k +\frac{3i}{2} - 9k + 6 + 3k'. \\
\end{align*}
When $\const  \geq 16$, this is at least $3k'$ and $F$ contains $k'$ disjoint cycles by Theorem~\ref{thm:3k}.
\end{proof}

\section*{Acknowledgement}
The authors thank Jaehoon Kim for helpful comments.

\bibliographystyle{abbrv}

\begin{thebibliography}{10}

\bibitem{CFKS}
S.~Chiba, S.~Fujita, K.-I. Kawarabayashi, and T.~Sakuma.
\newblock Minimum degree conditions for vertex-disjoint even cycles in large
  graphs.
\newblock {\em Adv. in Appl. Math.}, 54:105--120, 2014.

\bibitem{C-H}
K.~Corr\'{a}di and A.~Hajnal.
\newblock On the maximal number of independent circuits in a graph.
\newblock {\em Acta Mathematica Hungarica}, 14(3-4):423--439, 1963.

\bibitem{D-E}
G.~Dirac and P.~Erd\H{o}s.
\newblock On the maximal number of independent circuits in a graph.
\newblock {\em Acta Mathematica Hungarica}, 14(1-2):79--94, 1963.

\bibitem{Di}
G.~A. Dirac.
\newblock Some results concerning the structure of graphs.
\newblock {\em Canad. Math. Bull.}, 6:183--210, 1963.

\bibitem{Enomoto}
H.~Enomoto.
\newblock On the existence of disjoint cycles in a graph.
\newblock {\em Combinatorica}, 18(4):487--492, 1998.

\bibitem{Erdos}
P.~Erd\H{o}s.
\newblock On some aspects of my work with {G}abriel {D}irac.
\newblock In {\em Graph theory in memory of {G}. {A}. {D}irac ({S}andbjerg,
  1985)}, volume~41 of {\em Ann. Discrete Math.}, pages 111--116.
  North-Holland, Amsterdam, 1989.

\bibitem{HSz}
A.~Hajnal and E.~Szemer{\'e}di.
\newblock Proof of a conjecture of {P}. {E}rd{\H o}s.
\newblock In {\em Combinatorial theory and its applications, {II} ({P}roc.
  {C}olloq., {B}alatonf\"ured, 1969)}, pages 601--623. North-Holland,
  Amsterdam, 1970.

\bibitem{KKM}
H.~Kierstead, A.~Kostochka, and A.~McConvey.
\newblock Strengthening theorems of {D}irac and {E}rd{\H{o}s} on disjoint
  cycles.
\newblock {\em J. Graph Theory}, 85(4):788--802, 2017.

\bibitem{KKMY}
H.~Kierstead, A.~Kostochka, T.~Molla, and E.~Yeager.
\newblock Sharpening an {O}re-type version of the {C}orr\'{a}di-{H}ajnal
  theorem.
\newblock {\em Abh. Math. Semin. Univ. Hambg.}
\newblock To appear.

\bibitem{K-K-Y}
H.~Kierstead, A.~Kostochka, and E.~Yeager.
\newblock On the {C}orr{\'{a}}di-{H}ajnal theorem and a question of {D}irac.
\newblock {\em J. Combin. Theory Ser. B}, 122:121--148, 2017.

\bibitem{KK-Ore}
H.~A. Kierstead and A.~V. Kostochka.
\newblock An {O}re-type theorem on equitable coloring.
\newblock {\em J. Combin. Theory Ser. B}, 98(1):226--234, 2008.

\bibitem{KK-refCH}
H.~A. Kierstead and A.~V. Kostochka.
\newblock A refinement of a result of {C}orr\'adi and {H}ajnal.
\newblock {\em Combinatorica}, 35(4):497--512, 2015.

\bibitem{Wang}
H.~Wang.
\newblock On the maximum number of independent cycles in a graph.
\newblock {\em Discrete Math.}, 205(1-3):183--190, 1999.

\end{thebibliography}

\end{document}